\newcommand{\C}{{\mathbb C}}
\renewcommand{\H}{{\mathbb H}}
\renewcommand{\P}{{\mathbb P}}
\newcommand{\Q}{{\mathbb Q}}
\newcommand{\V}{{\mathbb V}}
\newcommand{\W}{{\mathbb W}}
\newcommand{\lra}{\longrightarrow }
\theoremstyle{plain}
\newtheorem{thm}{Theorem}
\newtheorem{lemma}[thm]{Lemma}
\newtheorem{cor}[thm]{Corollary}
\newtheorem{remark}[thm]{Remark}
\newtheorem{definition}[thm]{Definition}
\numberwithin{thm}{section}
\numberwithin{equation}{section}
\begin{document}

\title{Hodge numbers for the cohomology of Calabi-Yau type local systems}
\dedicatory{Klaus Hulek zum 60. Geburtstag gewidmet}
\author[H. Hollborn]{Henning Hollborn}
\author[S. M\"uller--Stach]{Stefan M\"uller--Stach}

\address{Mathematisches Institut der Johannes Gutenberg Universit\"at Mainz,
Staudingerweg 9, 55099 Mainz, Germany}
\email{hollborn@uni-mainz.de}
\email{mueller-stach@uni-mainz.de}

\begin{abstract}
We determine the Hodge numbers of the cohomology group 
$H^1_{L^2}(S,\V)=H^1(\bar S, j_*\V)$ using Higgs cohomology, where the local system $\V$ is induced by 
a family of Calabi-Yau threefolds over a smooth, quasi-projective curve $S$. This generalizes previous 
work to the case of quasi-unipotent, but not necessarily unipotent, local monodromies at infinity.
We give applications to Rohde's families of Calabi-Yau $3$-folds.
\end{abstract}

\footnotetext{Mathematics Classification Number: 14C25, 14F17, 14G35, 32M15}
\footnotetext{Keywords: Calabi-Yau manifold, Higgs bundle, Cohomology, Hodge theory}

\maketitle

\section*{Introduction} 

The first $L^2$-cohomology group $H^1_{L^2}(S,\V)=H^1(\bar{S},j_*\V)$, where 
$\V$ is a variation of Hodge structures $\V$ of weight $m$ over a smooth, quasi-projective curve $S=\bar{S} \setminus D {\buildrel j \over \hookrightarrow} \bar{S}$,
carries a pure Hodge structure of weight $m+1$ by \cite{zucker}. The goal of this paper is to continue the study of its Hodge numbers. 
We build up on the work done in \cite{hanoi}, using the methods of Zucker \cite{zucker}, but in addition the equivalent framework of Higgs bundles from 
the work of Jost, Yang, and Zuo \cite{jyz}. In \cite{hanoi} the local monodromies were assumed to be unipotent, but we show that one may skip this assumption, and
get similar formulae nevertheless. For simplicity, we will assume that all Hodge numbers of $\V$ are equal to one. Such situations
occur for families of elliptic curves, for the transcendental cohomology of families of K3 surfaces 
with generic Picard number $19$, and for certain families of Calabi-Yau 3-folds. 

The case of primary interest will be $m=3$, i.e., families of Calabi-Yau $3$-folds. 
However, for other applications we will also state results for the cases $m=1$ and $m=2$, which 
go back to work of Stiller \cite{stiller} and Cox-Zucker \cite{coxzucker}.

The group $H^1_{L^2}(S,\V)$ is of interest in theoretical physics \cite{morrison}, as the presence of codimension two cycles on the total space of a fibration
of Calabi-Yau $3$-folds implies that its $(2,2)$-Hodge number is non-zero.

The plan of this paper is as follows: After reviewing the basics of $L^2$-Higgs cohomology, 
we discuss the cases $m=1$, $m=2$ and $m=3$ separately and state the results in each case, comparing with the existing literature.
In case $m=3$ we extend the results from \cite{hanoi} to the case of non-unipotent monodromies at infinity and complete some tables of Hodge numbers there.   
In the last section we discuss some examples without maximally unipotent degeneration due to J.~C.~Rohde \cite{rohde,bert}. 
These examples are interesting as they contain many CM points in moduli induced by underlying Shimura varieties. 

\section{The basic set-up: Higgs cohomology} 

We consider a smooth, connected, projective family $f: X \longrightarrow S$ of $m$-dimensional varieties over a smooth quasi-projective curve $S$. 
Denote by $\bar{S}$ a smooth compactification of $S$, and by $\bar{f}: \bar{X} \to \bar{S}$ an extension of $f$ to a flat family over $\bar{S}$. 
Associated to this situation is a local system $\V=R^mf_*\C$ and the corresponding vector bundle $V:=\V \otimes {\mathcal O}_S$
on $S$. We would like to compute $H^1(\bar{S},j_*\V)$ in terms of the degeneration data of $\bar{f}$. 

We denote by $T$ the local monodromy matrix around a point in $D$ at infinity. 
$\V$ has quasi-unipotent monodromies at all points of $D:=\bar{S} \setminus S$. 
If $\bar{f}$ is semistable in codimension one, then the local monodromies are unipotent. 
After Deligne, the vector bundle $V$ has a quasi-canonical extension $\bar{V}$ to $\bar{S}$ 
as a vector bundle together with a logarithmic Gau{\ss}-Manin connection
$$
\bar{\nabla}: \bar{V} \to \bar{V} \otimes \Omega^1_{\bar{S}}(\log D).
$$
In the case of unipotent local monodromies $\bar{V}$ has degree zero, but not in the general case. 
The Hodge filtration $V=F^0 \supset F^1 \supset \cdots \supset F^m \supset F^{m+1}=0$ also extends to $\bar{S}$ and we define
$$
E^{p,m-p}:=F^p/F^{p+1}
$$
as vector bundles on $\bar{S}$. Let 
$$
E:= \bigoplus_{p=0}^m  E^{p,m-p}
$$
be the associated Higgs bundle with Higgs field
$$
\vartheta: E \to E \otimes \Omega^1_{\bar{S}}(\log D),
$$
where 
$$
\vartheta: E^{p,m-p} \to E^{p-1,m-p+1} \otimes \Omega^1_{\bar{S}}(\log D)
$$ 
is induced by $\bar{\nabla}$ and Griffiths transversality. In particular, 
the Higgs bundle induces a complex of vector bundles 
$$
E^\bullet: E {\buildrel \vartheta \over \longrightarrow} E \otimes \Omega^1_{\bar{S}}(\log D).
$$
Since $\dim(S)=1$ here, the usual condition $\vartheta \wedge \vartheta=0$ is empty, and the complex lives only in degrees $0$ and $1$.
The hypercohomology group $\H^1(E^\bullet)$ computes $H^1(S,\V)$ \cite{jyz,zucker}. 

If the local monodromy matrix $T$ at some point $P \in D$ is unipotent, then its logarithm $N:=\log(T)$ is nilpotent. Any nilpotent endomorphism $N$ 
of a vector space $V_0$ satisfying $N^m \neq 0$ and $N^{m+1}=0$ defines a natural increasing filtration on $V_0$:
\[
0 \subset W_{-m} \subset W_{-m+1} \subset \cdots \subset W_0 \subset W_{1} \subset \cdots \subset W_{m}=V_0, 
\]
which has the following definition: if $N^{m+1}=0$ but $N^m \neq 0$, we put
\[
W_{m-1}={\rm Ker}(N^m), \quad W_{-m}={\rm Im}(N^m).
\]
The further groups $W_k$ for $-m < k \le m-2$ are inductively constructed by
requiring that $N(W_k)={\rm Im}(N) \cap W_{k-2} \subset W_{k-2}$ and 
$$
N^k: {\rm Gr}^W_{k}(V_0) \to {\rm Gr}^W_{-k}(V_0) 
$$
are isomorphisms. If $V_0$ is the fiber of $\V$ at a smooth point this filtration is called the 
{\em monodromy weight filtration}. Prop. 4.1. of \cite{zucker} states that in the unipotent case one has a resolution which locally looks like
$$
0 \to j_*\V \to [W_0 + t \bar{V}] {\buildrel \bar{\nabla} \over \longrightarrow} \frac{dt}{t} \otimes [W_{-2}+t \bar{V}] \to 0. 
$$
In the quasi-unipotent case with no unipotent part one has on the other hand locally a resolution of the form
$$
0 \to j_*\V \to \bar{V} {\buildrel \bar{\nabla} \over \longrightarrow} \frac{dt}{t} \otimes \bar{V} \to 0
$$
by Prop. 6.9. of \cite{zucker} and the stalk of $j_*\V$ at $t=0$ is zero. 

Zucker also studies the Hodge filtration on $\bar{V}$. Theorem 11.6 in loc. cit. gives eventually a representation
of $H^1(\bar{S}, j_*\V)$ and its Hodge components. Instead of this de Rham representation we will switch to the corresponding
Higgs version. 

We can use the monodromy weight filtration $W_*$ on each fiber $E_P,\; P \in D$ to define the {\em $L^2$-Higgs complex} 
\[ 
(\Omega_{(2)}^{\bullet}(E),\theta): 
\Omega_ {(2)}^0(E) \subset E,\;\;\;\Omega_{(2)}^1(E) \subset E \otimes \Omega_{\bar S}^1(\log D),
\]
The sub-sheaves in each degree are defined near $P \in D$ as
\[
\Omega^0_{(2)}(E):=W_0 + tE, \quad \Omega^1_{(2)}(E):=(W_{-2} + tE) \otimes \Omega^1_{\bar S}(\log D).
\]
The notation is such that $t$ is a local parameter with $P=\{t=0\} \in D$ and
the monodromy weight filtration is given by the logarithm $N=\log(T)$. 
At any point $P \in S$ outside $D$, the $L^2$-Higgs complex is just given by the Higgs bundle.

It can be shown \cite{jyz} that the hypercohomology of the $L^2-$ Higgs complex 
$(\Omega_{(2)}^{\bullet}(E),\theta)$ is isomorphic to the $L^2$-cohomology group
\[ 
H^{k}_{(2)}(S,E)=H^k(\bar{S},j_*\V)=\H^k(\Omega_{(2)}^{\bullet}(E),\theta).
\]
In the following sections, we study the local structure of $(\Omega_{(2)}^{\bullet},\theta)$ for the case of logarithmic Higgs bundles of type $(1,1,\ldots,1,1)$, so that
each summand $E^{p,m-p}$ of $E$ is a line bundle.
For the points $P \in D$ one has to distinguish cases corresponding to the possible Jordan normal forms of the endomorphism $N$.
The decomposition 
$$
E= \bigoplus_{p=0}^m  E^{p,m-p}
$$
induces a decomposition 
$$
\Omega_{(2)}^{\bullet}(E)=\bigoplus_{p=0}^m \Omega_{(2)}^{\bullet}(E)^{p.m-p},
$$
where
$$
\Omega_{(2)}^0(E)^{p,m-p}:=\Omega_{(2)}^0(E) \cap E^{p,m-p},
$$
$$
\Omega_{(2)}^1(E)^{p,m-p}:=\Omega_{(2)}^1(E) \cap E^{p-1,m-p+1} \otimes \Omega_{\bar S}^1(\log D).
$$
The hypercohomology spectral sequence associated to this filtration induces the Hodge structure on $H^{k}_{(2)}(S,E)$.  

\section{Elliptic families} 

In the case of families of elliptic curves ($m=1$) we obtain from the previous results: 

\begin{thm}[Zucker \cite{zucker}]
The $L^2$-Higgs complex for $E$ is given by:
\begin{align*}
\Omega^0_{(2)}(E)^{1,0}&=E^{1,0}(-I)  \cr 
\Omega^0_{(2)}(E)^{0,1}&=E^{0,1}  \cr 
\Omega^1_{(2)}(E)^{1,0}&=E^{1,0}(II) \otimes \Omega^1_{\bar{S}}  \cr 
\Omega^1_{(2)}(E)^{0,1}&=E^{0,1}(II) \otimes \Omega^1_{\bar{S}}
\end{align*}
Here $I$ is the set of points with unipotent local monodromy (denoted by type $I_b$ in the Kodaira classification of singular fibers), $II$ the set of remaining
non-unipotent singular points. 
\end{thm}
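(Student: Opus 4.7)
The plan is to determine $\Omega^\bullet_{(2)}(E)$ locally at each point of $\bar S$ and then assemble. The points split into three classes: smooth interior points $P\in S$, unipotent boundary points $P\in I$ (Kodaira type $I_b$, so $T=\exp(N)$ with $N\ne 0$, $N^2=0$), and non-unipotent boundary points $P\in II$. In each case the local form of $\Omega^\bullet_{(2)}(E)$ is fixed by one of the two local resolutions of $j_*\V$ recalled above (Prop.~4.1 and Prop.~6.9 of \cite{zucker}).

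At a smooth interior point $P\in S$ there is nothing to do: the $L^2$-complex equals the ordinary Higgs complex and $\Omega^1_{\bar S}(\log D)=\Omega^1_{\bar S}$ locally, so no twist arises. At $P\in II$ the second resolution applies and gives $\Omega^0_{(2)}(E)=\bar V$ and $\Omega^1_{(2)}(E)=\bar V\otimes\Omega^1_{\bar S}(\log D)$ locally; the degree-$0$ part is therefore unmodified, while the simple log pole at $P$ in the degree-$1$ part produces the $+P$ contribution to the $II$-twist on each Hodge summand.

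The substantial case is $P\in I$. For $m=1$ the conditions $N^2=0$, $N\ne 0$ force (in the paper's conventions for the monodromy weight filtration) $W_{-2}=0$ and $W_{-1}=W_0=\mathrm{Im}(N)$. The first resolution then becomes, locally,
\[
\Omega^0_{(2)}(E)=W_0+t\bar V,\qquad \Omega^1_{(2)}(E)=t\bar V\otimes\Omega^1_{\bar S}(\log D)=\bar V\otimes\Omega^1_{\bar S},
\]
the factor $t$ cancelling the pole of $dt/t$. The Hodge-theoretic ingredient is that at an $I_b$-degeneration the graded pieces $\mathrm{Gr}^W_{\pm 1}$ of the limit MHS are pure of Hodge types $(0,0)$ and $(1,1)$, equivalently $V_0=F^1V_0\oplus W_0$. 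Granted this, a short local computation gives $(W_0+t\bar V)\cap F^1=tF^1$, so the $(1,0)$-summand acquires a vanishing of order one at $P$, while the quotient $(W_0+t\bar V)/tF^1\hookrightarrow E^{0,1}$ is an isomorphism at the fiber $P$ (by the direct-sum decomposition above) and hence on a neighborhood, giving $\Omega^0_{(2)}(E)^{0,1}=E^{0,1}$ without twist.

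Gluing the three local models over $\bar S$ produces the divisor $-I$ on $\Omega^0_{(2)}(E)^{1,0}$, no divisor change on $\Omega^0_{(2)}(E)^{0,1}$, and the divisor $+II$ on both summands of $\Omega^1_{(2)}(E)$, which is exactly the theorem. The main subtlety I expect is turning the fiberwise splitting $V_0=F^1V_0\oplus W_0$ at $P\in I$ into a clean $\mathcal{O}_{\bar S}$-module statement near $P$: one must verify that $W_0$ extends as a saturated sub-bundle of $\bar V$ compatible with both $\bar\nabla$ and the Hodge filtration. This is standard once $W_0$ is identified with the flat extension of the monodromy-invariant eigenspace, but it is the one place where the $L^2$-conditions genuinely enter.
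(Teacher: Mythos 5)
Your proposal is correct and follows essentially the same route as the paper: a case analysis over interior points, unipotent points (via Zucker's Prop.~4.1, where $W_{-2}=0$ and $W_0+t\bar V$ meets $F^1$ in $tF^1$), and non-unipotent points (via Prop.~6.9). The paper simply asserts the local form $W_0+t\bar V=tE^{1,0}\oplus E^{0,1}$, $W_{-2}+t\bar V=t\bar V$ at points of type $I$, whereas you justify it through the Hodge types of the limit mixed Hodge structure; this is a welcome amplification, not a different argument.
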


\begin{proof}
Elliptic fibrations have either unipotent local monodromy $T$ at points of type $I$, where the Jordan normal form of $T$ 
is given by the matrix 
$$
T=\left( \begin{matrix} 1 & 1 \cr 0 & 1 \end{matrix} \right)
$$
or non-unipotent local monodromies, where $T$ is equivalent to
$$
T=\left( \begin{matrix} \lambda & 1 \cr 0 & \lambda \end{matrix} \right), \; {\rm or} \; 
T=\left( \begin{matrix} \lambda_1 & 0 \cr 0 & \lambda_2 \end{matrix} \right).
$$
for some roots of unity $\lambda, \lambda_i \neq  1$.
In the first case, Zucker \cite[Prop. 4.1.]{zucker} gives a monodromy weight filtration locally at a point $P=\{t=0\} \in I$ which looks like 
$W_0=W_{-1}= t E^{1,0} \oplus E^{0,1}$ and $W_{-2}=tE$, hence the claim. At a non-unipotent point  $P \in II$, \cite[ Prop. 6.9.]{zucker}
shows the claim as well.  
\end{proof}

These observations imply the following well-known theorem.

\begin{thm}[Cox-Zucker \cite{coxzucker}] \label{m1} Assume that $\V$ is irreducible, 
and that $\vartheta: E^{1,0} \to E^{0,1}\otimes \Omega^1_ {\bar{S}}(\log D)$ 
is a non-zero map with $a+|II|>0$, where $a:=\deg E^{1,0}$. 
Then the Hodge numbers for the pure Hodge structure of weight $2$ on $H^1(\bar{S},j_*\V)$ are 
$$
h^{2,0}=h^{0,2}=g-1+a+ |II|, \; h^{1,1}=2g-2-2a+|I|.
$$ 
This implies the well-known formula $h^1(j_*\V)=4g-4 + |I| +2 |II|$, see \cite[page 39]{coxzucker}.
\end{thm}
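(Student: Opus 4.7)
The plan is to use the Hodge decomposition
$$H^1(\bar{S},j_*\V)=\bigoplus_{p=0}^{2}\H^1\bigl(\gr^p_F\,\Omega^\bullet_{(2)}(E)\bigr)$$
coming from the filtration $F$ on the $L^2$-Higgs complex, and to evaluate each graded piece by Riemann-Roch and Serre duality on the curve $\bar{S}$. Combining Theorem~2.1 with the definition of $\Omega^\bullet_{(2)}(E)^{p,m-p}$ from Section~1, for $m=1$ the three relevant graded complexes are
\begin{align*}
\gr^0_F&:\quad E^{0,1}\text{ in degree }0,\\
\gr^1_F&:\quad E^{1,0}(-I)\,{\buildrel \vartheta \over \longrightarrow}\,E^{0,1}(II)\otimes\Omega^1_{\bar{S}},\\
\gr^2_F&:\quad E^{1,0}(II)\otimes\Omega^1_{\bar{S}}\text{ in degree }1,
\end{align*}
and their first hypercohomologies compute $h^{0,2}$, $h^{1,1}$, $h^{2,0}$ respectively.

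The first step is to handle $h^{2,0}=h^0(\bar{S},E^{1,0}(II)\otimes\Omega^1_{\bar{S}})$. Riemann-Roch gives $\chi=a+|II|+g-1$, and Serre duality identifies the potential obstruction $h^1$ with $h^0((E^{1,0})^{-1}(-II))$; this vanishes because the line bundle $(E^{1,0})^{-1}(-II)$ has negative degree $-(a+|II|)<0$ by the standing hypothesis. Hence $h^{2,0}=g-1+a+|II|$, and the Hodge symmetry $h^{p,q}=h^{q,p}$ for the real pure Hodge structure on $H^1(\bar{S},j_*\V)$ then yields $h^{0,2}=h^{2,0}$ at once.

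For $h^{1,1}$, the irreducibility of $\V$ together with the hypothesis $\vartheta\neq 0$ forces the map of line bundles $\vartheta\colon E^{1,0}(-I)\to E^{0,1}(II)\otimes\Omega^1_{\bar{S}}$ to be injective, so that $\H^1(\gr^1_F)=h^0(\mathrm{coker}\,\vartheta)$ equals the degree difference
$$\deg\bigl(E^{0,1}(II)\otimes\Omega^1_{\bar{S}}\bigr)-\deg\bigl(E^{1,0}(-I)\bigr)=\deg E^{0,1}+|II|+(2g-2)-a+|I|.$$
The crucial numerical input is $\deg E^{0,1}=-a-|II|$, which follows from $\deg(\det\bar{V})=-|II|$: at each $P\in II$ the two eigenvalues of the residue of $\bar{\nabla}$ lie in $(0,1)$ and sum to $1$ by the symplectic polarization of the weight-one VHS, so $\det\bar{V}$ is the trivial extension twisted by $\mathcal{O}\bigl(-\sum_{P\in II}P\bigr)$. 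Substituting produces $h^{1,1}=2g-2-2a+|I|$.

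The main obstacle is precisely this degree bookkeeping for the Deligne canonical extension in the non-unipotent case; once the identity $\deg E^{0,1}=-a-|II|$ is in hand, the rest reduces to Riemann-Roch on $\bar{S}$. Summing the three Hodge numbers reproduces $h^1(j_*\V)=4g-4+|I|+2|II|$, which one can also cross-check from the topological Euler-characteristic formula $-\chi(\bar{S},j_*\V)=2(2g-2)+|I|+2|II|$ combined with the vanishing $H^0=H^2=0$ coming from irreducibility of $\V$.
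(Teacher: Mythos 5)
Your argument is correct and follows the same skeleton as the paper's proof: the decomposition of the $L^2$-Higgs complex into its three graded pieces, Riemann--Roch plus Serre duality for $h^{2,0}=h^{0,2}$ (the paper invokes Hodge symmetry and Riemann--Roch in the same way, leaving the vanishing of the obstruction $h^1$ implicit where you spell it out), and the identification of $h^{1,1}$ with the length of the cokernel of the injective map of line bundles $E^{1,0}(-I)\to E^{0,1}(II)\otimes\Omega^1_{\bar S}$. The one place where you genuinely diverge is the ``crucial numerical input'' $\deg E^{0,1}=-a-|II|$: you derive it directly from the residues of the quasi-canonical extension, noting that at each point of type $II$ the two residue eigenvalues lie in $(0,1)$ and sum to $1$ by the symplectic polarization, so $\deg\det\bar V=-|II|$. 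The paper never justifies this identity in the proof of this theorem (it simply asserts that the degree difference gives the stated $h^{1,1}$, and only recovers $E^{0,1}=(E^{1,0})^{-1}(-II)$ a posteriori in a remark for $\bar S=\P^1$); in the analogous weight $2$ and $3$ theorems it instead backs out the middle degree from the known total $h^1(j_*\V)$ as a ``checking sum.'' Your residue computation is the more self-contained route and is consistent with the paper's convention that the parabolic degree of $E$ vanishes, so it closes a gap the paper leaves to the reader rather than introducing one.
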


\begin{proof} The Higgs complex is given by
$$
\begin{xy}
\xymatrix{
E^{1,0}(-I)  \ar[rd]^{\vartheta}_{\neq 0} &   E^{0,1}   \\
E^{1,0}(II)\otimes  \Omega^1_{\bar{S}} &  E^{0,1}(II) \otimes  \Omega^1_{\bar{S}}  } 
\end{xy}
$$
Note that both $\Omega^0_{(2)}(E)^{0,1}=E^{0,1}$ and $\Omega^1_{(2)}(E)^{1,0}=E^{1,0}(II) \otimes \Omega^1_{\bar{S}}$ 
have neither incoming nor outgoing Higgs differential. By Hodge duality, i.e., $h^{2,0}=h^{0,2}$, we get
$h^1(E^{0,1})=h^0(E^{1,0}(II)\otimes \Omega^1_ {\bar{S}})$. Under the assumption $a+|II|>0$ this 
gives the formula for $h^{2,0}=h^{0,2}$ by applying Riemann-Roch to the line bundle $E^{1,0}(II)\otimes \Omega^1_ {\bar{S}}$. 
$h^{1,1}$ is $h^0$ of the cokernel of $\vartheta: \Omega^0_{(2)}(E)^{1,0} \to \Omega^1_{(2)}(E)^{0,1}$, 
hence the difference of the degrees of both line bundles, from which the rest of the assertion follows.
\end{proof}

\begin{remark} \label{pardeg} The assumptions in the theorem are not independent. The condition that $a+|II|>0$ is not always 
satisfied, but in many cases: the parabolic degree of any subbundle $F \subset E$ is defined as 
$$
\deg_p F :=\deg F + \sum_{P \in II} \; \sum_{0 \le \alpha < 1} \alpha \dim ({\rm Gr}_\alpha F_P),
$$
where ${\rm Gr}_\alpha$ is the graded piece of the parabolic filtration corresponding to the monodromy $\exp(2 \pi i \alpha)$. 
For $F=E^{0,1}$, a Higgs subbundle of $(E,\vartheta)$ with $\vartheta=0$, 
one gets $\deg_p(E^{0,1}) \le \deg_p(E)=0$ by the Simpson correspondence \cite[Prop. 2.1]{jz}, which implies $\deg_p(E^{1,0})=-\deg_p(E^{0,1}) \ge 0$. 
Therefore, if the double sum is not zero, i.e., some $\alpha>0$ occurs, then $0 \le \deg_p E^{1,0} < a + |II|$, since all Hodge numbers are $1$. 
\end{remark}

\begin{remark}
In the case $\bar{S}=\P^1$ and $\deg E^{0,1} \le -2$,
the proof states that $h^1(E^{0,1})=h^0((E^{0,1})^\vee \otimes \Omega^1_ {\bar{S}})=h^0(E^{1,0}(II)\otimes \Omega^1_ {\bar{S}})$. 
This implies that $E^{0,1}=(E^{1,0})^{-1}(-II)$. 
\end{remark}

\section{Families of K3 surfaces} 

With the previous notation, we consider a smooth projective family of K3 surfaces $f: X \longrightarrow S$ with generic Picard number $19$ 
over a smooth curve $S$. 
Associated to this situation is a local system $\V \subset R^2f_*\C$ of rank three, given fiberwise by the transcendental cohomology. Let 
$$
E:= E^{2,0} \oplus E^ {1,1} \oplus  E^{0,2}
$$
be the associated Higgs bundle with Higgs field
$$
\vartheta: E \to E \otimes \Omega^1_{\bar{S}}(\log D).
$$
Now we make the following \\
{\bf Assumption:} \emph{Each local monodromy is either unipotent or has no unipotent part.}
In other words, there are no mixed cases with non-zero unipotent and non--unipotent pieces.
This implies that the Jordan normal forms for the local monodromies are 
$$
T=\left( \begin{matrix} 1 & 1 & 0  \cr 0 & 1 & 1 \cr 0 & 0 & 1 \end{matrix} \right),  \; 
\left( \begin{matrix} 1 & 1 & 0  \cr 0 & 1 & 0  \cr  0 & 0 & 1 \end{matrix} \right),  \;
\left( \begin{matrix} \lambda & 1 & 0  \cr 0 & \lambda & 1 \cr 0 & 0 & \lambda \end{matrix} \right), \;
\left( \begin{matrix} \lambda_1 & 0  & 0 \cr 0 & \lambda_2 &  0 \cr 0 & 0 & \lambda_3  \end{matrix} \right) \; {\rm or} \;
\left( \begin{matrix} \lambda_1 & 1  & 0 \cr 0 & \lambda_1 &  0 \cr 0 & 0 & \lambda_2  \end{matrix} \right),
$$
with $\lambda, \lambda_i \neq 1$ roots of unity. 

\begin{lemma}
Only the Jordan normal forms 
$$
T_I=\left( \begin{matrix} 1 & 1 & 0  \cr 0 & 1 & 1 \cr 0 & 0 & 1 \end{matrix} \right), \;  
T_{II}=\left( \begin{matrix} \lambda & 1 & 0  \cr 0 & \lambda & 1 \cr 0 & 0 & \lambda \end{matrix} \right), \;
T_{II}=\left( \begin{matrix} \lambda_1 & 0  & 0 \cr 0 & \lambda_2 &  0 \cr 0 & 0 & \lambda_3  \end{matrix} \right) \; {\rm or} \;
T_{II}=\left( \begin{matrix} \lambda_1 & 1  & 0 \cr 0 & \lambda_1 &  0 \cr 0 & 0 & \lambda_2  \end{matrix} \right) 
$$
with $\lambda, \lambda_i \neq 1$ occur. The case $I$ is unipotent, the cases $II$ are strictly quasi--unipotent. 
\end{lemma}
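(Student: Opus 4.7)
The plan is two-fold: (i) enumerate the Jordan normal forms consistent with the ``no mixed parts'' hypothesis, producing the five matrices displayed above the lemma, and (ii) exclude the second one---the unipotent Jordan type $(2,1)$---by a Hodge-theoretic analysis of the limit mixed Hodge structure. Step (i) is a routine finite enumeration: a quasi-unipotent $T$ on a $3$-dimensional space is either unipotent with Jordan partition $(3)$, $(2,1)$, or $(1,1,1)$ (the last being the smooth case and discarded), or $T$ has no eigenvalue $1$, and the allowed Jordan structures in the latter situation are exhausted by the three $T_{II}$ shapes.

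The substantive step is (ii). Assume, for contradiction, that $T$ is unipotent with $N = \log T$ of Jordan type $(2,1)$. Since $T-I$ is nilpotent on the rational local system $V_0 \otimes \Q$, the logarithm series terminates and $N$ is rational. The monodromy weight filtration centered at $m = 2$ then reads
\[
0 = W_0 \subsetneq W_1 = {\rm Im}(N) \subsetneq W_2 = \Ker(N) \subsetneq W_3 = V_0,
\]
with $\Q$-dimensions $0, 1, 2, 3$, so each graded piece ${\rm Gr}^W_k$ ($k = 1, 2, 3$) is one-dimensional over $\Q$. By Schmid's theorem underlying \cite[Thm.~11.6]{zucker}, the limit Hodge filtration equips each ${\rm Gr}^W_k$ with a pure rational Hodge structure of weight $k$. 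But a nonzero pure $\Q$-Hodge structure of odd weight decomposes over $\C$ as a direct sum of complex-conjugate pairs $H^{p,q} \oplus \overline{H^{p,q}}$ with $p \neq q$, forcing its $\Q$-dimension to be even. Since $\dim_\Q {\rm Gr}^W_1 = 1$ is odd, no such Hodge structure exists, and the unipotent $(2,1)$ case is ruled out. The main obstacle is purely conceptual: one must be careful that $T$ unipotent implies $N$ rational, so that the parity obstruction is genuinely available.

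The remaining four forms present no such obstruction. For $T_I$ (unipotent Jordan block of size $3$) the weight filtration has $1$-dimensional graded pieces at the even weights $0, 2, 4$, of respective Hodge types $(0,0), (1,1), (2,2)$, all diagonal. For the three $T_{II}$ cases the monodromy has no eigenvalue $1$, so $(j_*\V)_P = 0$ by \cite[Prop.~6.9]{zucker}; moreover the nearby cycles decompose along the $T_s$-eigenspaces for $\lambda \neq 1$, which are defined over cyclotomic extensions of $\Q$ rather than $\Q$ itself, so the parity obstruction above is inapplicable and each of these Jordan types is admissible.
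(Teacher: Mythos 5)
Your proof is correct, but it rules out the unipotent Jordan type $(2,1)$ by a different mechanism than the paper. The paper's proof is a one-line polarization argument on the Hodge-graded pieces of the limit: the maps $E^{2,0}\xrightarrow{N}E^{1,1}\xrightarrow{N}E^{0,2}$ induced by $N$ are adjoint to one another under the polarization, so on one-dimensional pieces $N^2=0$ forces both arrows to vanish, hence $N=0$, contradicting $N\neq 0$ for type $(2,1)$. You instead pass to the weight-graded pieces of the limit mixed Hodge structure: for type $(2,1)$ the monodromy weight filtration centered at $2$ has one-dimensional graded quotients ${\rm Gr}^W_1$ and ${\rm Gr}^W_3$ in odd weight, which cannot carry a nonzero pure real (a fortiori rational) Hodge structure. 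Both arguments rest on Schmid's limit MHS together with the polarization, and both are sound; yours is somewhat more robust in that it uses only the real structure and the parity of odd-weight pure Hodge structures rather than the explicit self-duality of the chain of Hodge bundles, while the paper's duality argument is the one that transfers verbatim to the weight-three case in Section 4 and in \cite{hanoi}. Two minor remarks: the parity obstruction already works over $\R$, so the care you take about rationality of $N=\log T$ is not essential; and the closing paragraph arguing that the remaining four forms are ``admissible'' is not needed, since the lemma asserts only that no other Jordan forms occur, not that each listed form is actually realized.
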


\begin{proof} In the unipotent case, as in \cite[p. 11]{hanoi}, both maps in the sequence
$$
E^{2,0} {\buildrel N \over \to} E^{1,1} {\buildrel N \over \to} E^{0,2}
$$
are dual to each other. Hence, if $N^2=0$, both must be zero, which implies $N=0$. This excludes the second matrix.  
\end{proof}

\begin{thm}
The $L^2$-Higgs complex for $E$ is given by:
\begin{align*}
\Omega^0_{(2)}(E)^{2,0}&=E^{2,0}(-I)  \cr 
\Omega^0_{(2)}(E)^{1,1}&=E^{1,1}  \cr 
\Omega^0_{(2)}(E)^{0,2}&=E^{0,2}  \cr 
\Omega^1_{(2)}(E)^{2,0}&=E^{2,0}(II) \otimes \Omega^1_{\bar{S}} \cr 
\Omega^1_{(2)}(E)^{1,1}&=E^{1,1}(II) \otimes \Omega^1_{\bar{S}} \cr 
\Omega^1_{(2)}(E)^{0,2}&=E^{0,2}(I+II) \otimes \Omega^1_{\bar{S}}
\end{align*}
Here $I$ is again the set of points with unipotent local monodromy, $II$ the set of remaining
non-unipotent singular points. 
\end{thm}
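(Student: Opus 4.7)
The strategy parallels the proof of Theorem 2.1: analyze $\Omega^{\bullet}_{(2)}(E)$ locally at each point of $\bar S$ and then globalize. Over $S$ the $L^2$-Higgs complex equals the ordinary Higgs complex $E \to E \otimes \Omega^1_{\bar S}$, so only the contribution at each $P \in D$ needs to be worked out.

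At a strictly quasi-unipotent point $P \in II$ (any of the three Jordan types $T_{II}$ listed in the previous lemma), there is no unipotent part, and by Zucker's Prop.~6.9 the stalk of $j_*\V$ at $P$ is zero with local resolution
$$0 \to j_*\V \to \bar V \to \bar V \otimes \Omega^1_{\bar S}(\log D) \to 0.$$
Hence $\Omega^0_{(2)}(E) = \bar V$ and $\Omega^1_{(2)}(E) = \bar V \otimes \Omega^1_{\bar S}(\log D)$ near $P$. Splitting by Hodge type, each $\Omega^0_{(2)}(E)^{p,2-p}$ is unchanged while each $\Omega^1_{(2)}(E)^{p,2-p}$ retains the full log pole at $P$; globally this produces the twist by $\mathcal{O}(II)$ in all three degree-one components.

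At a unipotent point $P \in I$, the previous lemma forces $T_I$ to be a single Jordan block of size three, so $N = \log T_I$ satisfies $N^3 = 0$ and $N^2 \neq 0$. In Zucker's shifted convention the monodromy weight filtration on the fiber $V_0$ then has $W_{-2} = \im(N^2)$ of rank one, $W_0 = \Ker(N^2)$ of rank two, and all odd-indexed graded pieces vanish. The key extra input is that Griffiths transversality $N(F^p) \subset F^{p-1}$, together with the polarization on $\V$, forces the limit mixed Hodge structure to be of Hodge--Tate type: the Deligne bigrading reads $V_0 = I^{0,0} \oplus I^{1,1} \oplus I^{2,2}$, with $W_{-2} = I^{0,0}$, $W_0 = I^{0,0} \oplus I^{1,1}$, and with each $I^{p,p}$ identified through the Hodge filtration with the fiber $E^{p,2-p}_P$. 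By Schmid's nilpotent orbit theorem this extends to a local direct sum decomposition $\bar V|_U = \mathcal{I}^{0,0} \oplus \mathcal{I}^{1,1} \oplus \mathcal{I}^{2,2}$ on which the weight filtration is the obvious one.

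Plugging these descriptions into $\Omega^0_{(2)}(E) = W_0 + tE$ and $\Omega^1_{(2)}(E) = (W_{-2} + tE) \otimes \Omega^1_{\bar S}(\log D)$, and using that $t \cdot \Omega^1_{\bar S}(\log D) = \Omega^1_{\bar S}$ locally at $P$, one reads off $\Omega^0_{(2)}(E)^{2,0} = tE^{2,0}$ near $P$ while the other two degree-zero components are unchanged; in degree one, the log pole at $P$ is killed by the factor $t$ on the $E^{2,0}$ and $E^{1,1}$ components but survives on $E^{0,2}$. Combined with the $II$-contributions, this reproduces exactly the asserted global twists $(-I)$, $(II)$ and $(I+II)$. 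The main obstacle is the Hodge--Tate conclusion for the limit mixed Hodge structure at a unipotent point; once that input is available, the rest is a direct local calculation modelled on the elliptic case.
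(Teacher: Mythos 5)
Your proof is correct and follows essentially the paper's own route: the paper's proof is the single line ``exactly as in the case $m=1$ using Zucker's Props.\ 4.1 and 6.9,'' and your local computations at type $I$ points (full $3\times 3$ Jordan block, $W_0=\Ker(N^2)$, $W_{-2}=\im(N^2)$, $t\cdot\Omega^1_{\bar S}(\log D)=\Omega^1_{\bar S}$) and at type $II$ points (stalk of $j_*\V$ zero, no twist in degree $0$, full log pole in degree $1$) reproduce exactly that argument. One remark: the Hodge--Tate/nilpotent-orbit input you single out as ``the main obstacle'' is not actually needed, since the paper takes the monodromy weight filtration on the graded fiber $E_P=\bigoplus_p E^{p,2-p}_P$ for the residue of the Higgs field, which is homogeneous of bidegree $(-1,1)$; the weight filtration of a bihomogeneous nilpotent endomorphism is automatically compatible with the Hodge decomposition, so the splitting you derive from the limit mixed Hodge structure comes for free.
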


\begin{proof}
The proof is exactly as in the case $m=1$ using \cite[Props. 4.1 and 6.9.]{zucker}.  
\end{proof}

\begin{thm} \label{m2} Assume that $\V$ is irreducible, and that 
$\vartheta: E^{2,0} \to E^{1,1}\otimes \Omega^1_ {\bar{S}}(\log D)$ as well as 
$\vartheta: E^{1,1} \to E^{0,2}\otimes \Omega^1_ {\bar{S}}(\log D)$ are non-zero maps with $a+|II|>0$, 
where $a:=\deg E^{2,0}$. 
Then the Hodge numbers for the pure Hodge structure of weight $3$ on $H^1(\bar{S},j_*\V)$ are 
$$
h^{3,0}=h^{0,3}=g-1+a+|II|, \; h^{2,1}=h^{1,2}=2g-2-a+|I|+\frac{1}{2}|II|.
$$ 
In total, one has $h^1(j_*\V)=6g-6 + 2|I|+ 3|II|$, which agrees with \cite[Prop 3.6.]{hanoi}.
\end{thm}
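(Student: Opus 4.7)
The plan is to follow the strategy of Theorem \ref{m1}, splitting the $L^2$-Higgs complex into the four Hodge-graded pieces read off from the preceding theorem. The outer pieces are single line bundles $\mathrm{Gr}^3_F = E^{2,0}(II) \otimes \Omega^1_{\bar{S}}$ (in degree $1$) and $\mathrm{Gr}^0_F = E^{0,2}$ (in degree $0$), while the middle pieces are two-term complexes
$$
\mathrm{Gr}^2_F:\; E^{2,0}(-I)\xrightarrow{\vartheta} E^{1,1}(II)\otimes \Omega^1_{\bar{S}}, \qquad \mathrm{Gr}^1_F:\; E^{1,1}\xrightarrow{\vartheta} E^{0,2}(I+II)\otimes \Omega^1_{\bar{S}}.
$$
By the hypercohomology spectral sequence, $\H^1(\mathrm{Gr}^p_F)$ gives $h^{p,3-p}$ in the weight $3$ Hodge structure on $H^1(\bar{S}, j_*\V)$.

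For $h^{3,0} = h^0(E^{2,0}(II) \otimes \Omega^1_{\bar{S}})$ I would apply Riemann--Roch to this line bundle of degree $a + |II| + 2g - 2$; the assumption $a + |II| > 0$ forces $h^1 = 0$ by Serre duality (the dual bundle $(E^{2,0}(II))^{\vee}$ has negative degree), giving $h^{3,0} = g - 1 + a + |II|$. Hodge symmetry yields $h^{0,3} = h^{3,0}$ directly, bypassing any separate determination of $\deg E^{0,2}$.

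The main obstacle is the middle part: $\H^1(\mathrm{Gr}^2_F)$ and $\H^1(\mathrm{Gr}^1_F)$ individually depend on $\deg E^{1,1}$ and $\deg E^{0,2}$, which would require a case analysis of the Jordan types listed in the lemma and their parabolic weights. I would sidestep this by computing only the sum $h^{2,1} + h^{1,2}$ from the Euler characteristic of the $L^2$-Higgs complex. Since $\V$ is irreducible and non-trivial, $H^0(\bar{S}, j_*\V) = 0$, and Poincar\'e duality applied to $\V^\vee$ gives $H^2(\bar{S}, j_*\V) = 0$, so
$$
h^1(j_*\V) = -\chi(\bar{S}, j_*\V) = \chi(\Omega^1_{(2)}(E)) - \chi(\Omega^0_{(2)}(E)).
$$
Evaluating $\chi(L) = \deg L + 1 - g$ on each of the six line-bundle summands, the unknown degrees $\deg E^{1,1}$ and $\deg E^{0,2}$ appear identically in both sums and cancel, leaving $h^1(j_*\V) = 6g - 6 + 2|I| + 3|II|$. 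Finally, Hodge symmetry $h^{2,1} = h^{1,2}$ together with $h^{3,0} + h^{0,3} = 2(g - 1 + a + |II|)$ gives $h^{2,1} = h^{1,2} = 2g - 2 - a + |I| + \tfrac{1}{2}|II|$, as claimed.
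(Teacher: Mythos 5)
Your proof is correct, and its skeleton (the Hodge-graded pieces of the $L^2$-Higgs complex, Riemann--Roch on the outer line bundle, Hodge symmetry) matches the paper's; the genuine difference lies in how the middle Hodge numbers are pinned down. The paper computes $h^{2,1}$ directly as the degree difference of the line bundles $E^{2,0}(-I)$ and $E^{1,1}(II)\otimes\Omega^1_{\bar S}$, which leaves the unknown $b=\deg E^{1,1}$ in the formula; it then imports the total $h^1(j_*\V)=6g-6+2|I|+3|II|$ from \cite[Prop.~3.6]{hanoi} as a ``checking sum'' and solves for $b=-\tfrac12|II|$. You instead derive that same total internally, as minus the Euler characteristic of the $L^2$-Higgs complex, using $H^0(\bar S,j_*\V)=H^2(\bar S,j_*\V)=0$ (irreducibility of $\V$, non-triviality from $\vartheta\neq 0$, and duality) together with the observation that the unknown degrees $\deg E^{1,1}$ and $\deg E^{0,2}$ each occur once in $\Omega^0_{(2)}(E)$ and once in $\Omega^1_{(2)}(E)$ and hence cancel; $h^{2,1}=h^{1,2}$ then follows by subtraction. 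Your route is self-contained within the Higgs framework and avoids the external citation (whose proof is in any case the topological version of the same Euler-characteristic count, using that the non-unipotent monodromies have no invariants); the paper's route yields as a byproduct the degree $b=-\tfrac12|II|$ of the middle Hodge bundle, which your argument does not recover. Both versions rest on the same two inputs: the explicit local form of $\Omega^\bullet_{(2)}(E)$ at points of types $I$ and $II$, and the vanishing of $H^0$ and $H^2$.
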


\begin{proof} The Higgs complex is given by
$$
\begin{xy}
\xymatrix{
E^{2,0}(-I)  \ar[rd]^{\vartheta}_{\neq 0} &   E^{1,1}  \ar[rd]^{\vartheta}_{\neq 0}  & E^{0,2}  \\
E^{2,0}(II)\otimes  \Omega^1_{\bar{S}} &  E^{1,1}(II) \otimes  \Omega^1_{\bar{S}} & E^{0,2}(I+II)\otimes  \Omega^1_{\bar{S}} } 
\end{xy}
$$
Note that both $\Omega^0_{(2)}(E)^{0,2}=E^{0,2}$ and $\Omega^1_{(2)}(E)^{2,0}=E^{2,0}(+II) \otimes \Omega^1_{\bar{S}}$ 
have neither incoming nor outgoing Higgs differential. Hodge duality, i.e., $h^{3,0}=h^{0,3}$, 
implies $h^0(E^{2,0}(II)\otimes \Omega^1_{\bar{S}})=h^1(E^{0,2})$. 
Riemann-Roch applied to $E^{2,0}(II)$ then gives the formula for $h^{3,0}=h^{0,3}$ under the assumption $a+|II|>0$. 

The space $H^{2,1}$ is represented as global sections of the cokernel of the map 
$$
\Omega_{(2)}^0(E)^{2,0}\stackrel{\theta}{\lra} \Omega_{(2)}^1(E)^{1,1},
$$ 
hence we have to count the zeros of a map of line bundles
\[E^{2,0}(-I) \lra E^{1,1}(+II) \otimes \Omega_{\bar S}^1.\]
This number is given by the difference in degrees of the line bundles, so
\[
h^{2,1}=h^{1,2}=\deg E^{1,1}(+II) \otimes \Omega_{\bar S}^1-\deg E^{2,0}(-I)=2g-2 +\deg E^{1,1}-a+|I|+|II|.
\]
It is not true that $\deg E^{1,1}=0$ in the non--unipotent case.
Indeed let $b:=\deg E^{1,1}$. We thus obtain $h^{2,1}= 2g-2+b-a+|I|+|II|$. 

Now we use a checking sum: By \cite[Prop 3.6.]{hanoi} we know that 
$$
h^1(j_*\V)=h^{3,0}+h^{2,1}+h^{1,2}+h^{0,3}=6g-6+2|I|+3|II|,
$$
since by our assumption non--unipotent local monodromies have zero invariant subspace. 
This implies that $b=-\frac{1}{2}|II|$.
\end{proof}

\begin{remark} Condition $a+|II|>0$ again follows in many cases, see Remark \ref{pardeg}. Assume that $\bar{S}=\P^1$ and that $\deg E^{0,2} \le -2$.
The proof states that $h^1(E^{0,2})=h^0((E^{0,2})^\vee \otimes \Omega^1_ {\bar{S}})=h^0(E^{2,0}(II)\otimes \Omega^1_ {\bar{S}})$. 
This implies that $E^{0,2}=(E^{2,0})^{-1}(-II)$. 
\end{remark}

\section{Families of Calabi--Yau $3$--folds} 

We consider a smooth projective family of Calabi--Yau $3$--folds $f: X \longrightarrow S$ 
over a smooth curve $S$ as in \cite{hanoi}. 
Assume that $\bar{S}$ is a smooth compactification and consider a real VHS  
$\V \subset R^3f_*\C$ of rank four with Hodge numbers $(1,1,1,1)$. We use the previous notation and make again the assumption that 
each local monodromy is either unipotent or has no unipotent part.

This implies that the Jordan forms for the local monodromies $T$ are 
\begin{small}
$$
\left( \begin{matrix} 1 & 0 & 0 & 0  \cr 0 & 1 & 1 & 0 \cr 0 & 0 & 1 & 0 \cr 0 & 0 & 0 & 1 \end{matrix} \right), \; 
\left( \begin{matrix} 1 & 1 & 0 & 0  \cr 0 & 1 & 0 & 0 \cr 0 & 0 & 1 & 1 \cr 0 & 0 & 0 & 1 \end{matrix} \right), \; 
\left( \begin{matrix} 1 & 1 & 0 & 0  \cr 0 & 1 & 1 & 0 \cr 0 & 0 & 1 & 0 \cr 0 & 0 & 0 & 1 \end{matrix} \right), \; 
\left( \begin{matrix} 1 & 1 & 0 & 0  \cr 0 & 1 & 1 & 0 \cr 0 & 0 & 1 & 1 \cr 0 & 0 & 0 & 1 \end{matrix} \right), \;
\left( \begin{matrix} \lambda_1 & 0  & 0 & 0 \cr 0 & \lambda_2 &  0 & 0 \cr 0 & 0 & \lambda_3 & 0 \cr 0 & 0 & 0 &  \lambda_4 \end{matrix} \right),
$$
$$
\left( \begin{matrix} \lambda_1 & 0 & 0 & 0  \cr 0 & \lambda_2 & 1 & 0 \cr 0 & 0 & \lambda_2 & 0 \cr 0 & 0 & 0 & \lambda_3 \end{matrix} \right), \; 
\left( \begin{matrix} \lambda_1 & 1 & 0 & 0  \cr 0 & \lambda_1 & 0 & 0 \cr 0 & 0 & \lambda_2 & 1 \cr 0 & 0 & 0 & \lambda_2 \end{matrix} \right), \;
\left( \begin{matrix} \lambda_1 & 1 & 0 & 0  \cr 0 & \lambda_1 & 1 & 0 \cr 0 & 0 & \lambda_1 & 0 \cr 0 & 0 & 0 & \lambda_2 \end{matrix} \right) \; \; {\rm or}
\left( \begin{matrix} \lambda & 1 & 0 & 0  \cr 0 & \lambda & 1 & 0 \cr 0 & 0 & \lambda & 1 \cr 0 & 0 & 0 & \lambda \end{matrix} \right),
$$
\end{small}
with $\lambda, \lambda_i \neq 1$ roots of unity. 

\begin{lemma}
Only the Jordan normal forms 
\begin{tiny}
$$
T_I= \left( \begin{matrix} 1 & 0 & 0 & 0  \cr 0 & 1 & 1 & 0 \cr 0 & 0 & 1 & 0 \cr 0 & 0 & 0 & 1 \end{matrix} \right), \; 
T_{II}= \left( \begin{matrix} 1 & 1 & 0 & 0  \cr 0 & 1 & 0 & 0 \cr 0 & 0 & 1 & 1 \cr 0 & 0 & 0 & 1 \end{matrix} \right),\; 
T_{III}= \left( \begin{matrix} 1 & 1 & 0 & 0  \cr 0 & 1 & 1 & 0 \cr 0 & 0 & 1 & 1 \cr 0 & 0 & 0 & 1 \end{matrix} \right), \;
T_{IV} = \left( \begin{matrix} \lambda_1 & 0  & 0 & 0 \cr 0 & \lambda_2 &  0 & 0 \cr 0 & 0 & \lambda_3 & 0 \cr 0 & 0 & 0 &  \lambda_4 \end{matrix} \right),
$$
$$
T_{IV} = \left( \begin{matrix} \lambda_1 & 0 & 0 & 0  \cr 0 & \lambda_2 & 1 & 0 \cr 0 & 0 & \lambda_2 & 0 \cr 0 & 0 & 0 & \lambda_3 \end{matrix} \right), \; 
T_{IV} = \left( \begin{matrix} \lambda_1 & 1 & 0 & 0  \cr 0 & \lambda_1 & 0 & 0 \cr 0 & 0 & \lambda_2 & 1 \cr 0 & 0 & 0 & \lambda_2 \end{matrix} \right), \;
T_{IV} = \left( \begin{matrix} \lambda_1 & 1 & 0 & 0  \cr 0 & \lambda_1 & 1 & 0 \cr 0 & 0 & \lambda_1 & 0 \cr 0 & 0 & 0 & \lambda_2 \end{matrix} \right) \; {\rm or} \;
T_{IV} = \left( \begin{matrix} \lambda & 1 & 0 & 0  \cr 0 & \lambda & 1 & 0 \cr 0 & 0 & \lambda & 1 \cr 0 & 0 & 0 & \lambda \end{matrix} \right)
$$
\end{tiny}
with $\lambda, \lambda_i \neq 1$ occur. The cases $I$, $II$ and $III$ are unipotent, the cases $IV$ are strictly quasi--unipotent. 
\end{lemma}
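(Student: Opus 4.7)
The plan is to classify the $4\times 4$ Jordan normal forms that can arise as the local monodromy $T$ of a weight-$3$ polarized VHS $\V$ with Hodge numbers $(1,1,1,1)$, under the standing assumption that $T$ is either unipotent or has no unipotent part. The five strictly quasi-unipotent forms (all labelled $T_{IV}$) cause no issue: at such a point one has $(j_*\V)_P=0$ by Prop.~6.9 of \cite{zucker}, so no compatibility condition with the Hodge filtration is imposed, and the listed matrices exhaust all rank-$4$ Jordan types with no eigenvalue $1$. The substantive work lies in the unipotent case, where $N=\log T$ has one of the four Jordan types $(2,1,1)$, $(2,2)$, $(3,1)$ or $(4)$; since the first, second and last match $T_I$, $T_{II}$ and $T_{III}$, the goal reduces to excluding the $(3,1)$ form
\[
\left( \begin{matrix} 1 & 1 & 0 & 0  \cr 0 & 1 & 1 & 0 \cr 0 & 0 & 1 & 0 \cr 0 & 0 & 0 & 1 \end{matrix} \right).
\]

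To exclude this form I would argue via the limit mixed Hodge structure rather than through duality of Higgs maps as in the K3 lemma. The monodromy weight filtration centered at $m=3$ associated to Jordan type $(3,1)$ has $\dim {\rm Gr}^W_5=\dim {\rm Gr}^W_1=1$ and $\dim {\rm Gr}^W_3=2$. By Schmid's theorem, ${\rm Gr}^W_5$ carries a pure polarized Hodge structure of weight $5$, with Hodge filtration inherited from the limit filtration $F^\bullet_\infty$ on $V$. Since $\V$ has weight $3$ one has $F^4=0$, hence $h^{p,q}({\rm Gr}^W_5)=0$ for $p>3$, and by complex conjugation also for $q>3$. The only surviving bidegrees on ${\rm Gr}^W_5$ are $(2,3)$ and $(3,2)$, and the real structure forces $h^{2,3}=h^{3,2}$; combined with $h^{2,3}+h^{3,2}=\dim {\rm Gr}^W_5 = 1$ this would give $h^{2,3}=1/2 \notin \Z$, a contradiction.

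The main subtlety, and where I expect the obstacle, is verifying that this parity argument does not accidentally rule out $T_I$, $T_{II}$ or $T_{III}$ as well. A short case check suffices: for $T_{III}$ (size-$4$ block), each ${\rm Gr}^W_{2k}$ with $k=0,\dots,3$ is one-dimensional and forced to bidegree $(k,k)$; for $T_{II}$ (two $2$-blocks) the dimension-$2$ pieces ${\rm Gr}^W_2$ and ${\rm Gr}^W_4$ admit symmetric integer decompositions such as $(1,3)+(3,1)$ or $(0,2)+(2,0)$; and for $T_I$ (type $(2,1,1)$) the dimension-$1$ pieces ${\rm Gr}^W_4$ and ${\rm Gr}^W_2$ are forced to types $(2,2)$ and $(1,1)$. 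In each case the symmetry constraints have an integer-valued solution compatible with the dimensions $\dim F^p V$ fixed by the generic Hodge numbers, confirming the list stated in the lemma.
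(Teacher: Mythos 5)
Your argument is correct, and it is genuinely more self-contained than what the paper offers: for this lemma the paper's proof is a one-line citation to the discussion of normal forms in \cite[Sect.~1]{hanoi}, and the only in-text model is the K3 analogue, which is handled by a duality argument (the maps $E^{2,0}\to E^{1,1}\to E^{0,2}$ induced by $N$ are adjoint with respect to the polarization, so $N^2=0$ forces $N=0$). You instead exclude the unipotent Jordan type $(3,1)$ via the limit mixed Hodge structure: ${\rm Gr}^W_5$ would be a one-dimensional pure real Hodge structure of odd weight, impossible since $h^{p,q}=h^{q,p}$ with $p\neq q$ forces even dimension. That is the standard argument, it is sound, and your case check correctly confirms the parity constraint is vacuous for $T_I$, $T_{II}$, $T_{III}$; note you do not even need the step $F^4_\infty=0$, since conjugation symmetry alone rules out any odd-weight graded piece of odd dimension. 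What the duality route buys is uniformity with the K3 case; what your route buys is a clean numerical obstruction that does not require tracking how $N$ interacts with the limit Hodge filtration. One caveat: your claim that the strictly quasi-unipotent forms impose ``no compatibility condition with the Hodge filtration'' is too strong. After a finite base change the monodromy becomes unipotent with the same weight filtration $W(N)$, and the identical parity argument then also rules out the listed form $T_{IV}=J_3(\lambda_1)\oplus(\lambda_2)$ from actually occurring. Since the lemma only asserts that nothing outside the list occurs, this does not affect the correctness of your proof, but it means the $T_{IV}$ part of your argument establishes only that these forms need not be excluded for the lemma as stated, not that they all arise.
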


\begin{proof}
See the discussion of normal forms in \cite[Sect. 1]{hanoi}.
\end{proof}

\begin{thm}
The $L^2$-Higgs complex for $E$ is given by:
\begin{align*}
\Omega^0_{(2)}(E)^{3,0}&=E^{3,0}(-II-III)  \cr 
\Omega^0_{(2)}(E)^{2,1}&=E^{2,1}(-I-III)  \cr 
\Omega^0_{(2)}(E)^{1,2}&=E^{1,2}(-II)  \cr 
\Omega^0_{(2)}(E)^{0,3}&=E^{0,3}  \cr 
\Omega^1_{(2)}(E)^{3,0}&=E^{3,0}(IV) \otimes \Omega^1_{\bar{S}}  \cr 
\Omega^1_{(2)}(E)^{2,1}&=E^{2,1}(IV) \otimes \Omega^1_{\bar{S}}  \cr 
\Omega^1_{(2)}(E)^{1,2}&=E^{1,2}(IV) \otimes \Omega^1_{\bar{S}} \cr
\Omega^1_{(2)}(E)^{0,3}&=E^{0,3}(III+IV)) \otimes \Omega^1_{\bar{S}}
\end{align*}
Here $I$, $II$, $III$ are again the sets of points with unipotent local monodromy, $IV$ the set of remaining
non-unipotent singular points. 
\end{thm}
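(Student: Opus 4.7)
The plan is a purely local computation at each $P\in D$, patched together globally. Away from $D$ the $L^2$-Higgs complex agrees with the ordinary Higgs complex, so there is nothing to check. At $P\in D$ with unipotent monodromy $T=\exp(N)$, \cite[Prop.~4.1]{zucker} gives $\Omega^0_{(2)}(E)_P=W_0+tE$ and $\Omega^1_{(2)}(E)_P=(W_{-2}+tE)\otimes\Omega^1_{\bar S}(\log D)$, where $W_\bullet$ is the monodromy weight filtration of $N$ on the nearby fibre $V_0$. At a type-$IV$ point, \cite[Prop.~6.9]{zucker} gives the simpler resolution $\bar V\to(dt/t)\bar V$, so $\Omega^0_{(2)}(E)_P=\bar V$ and $\Omega^1_{(2)}(E)_P=\bar V\otimes\Omega^1_{\bar S}(\log D)$, imposing no vanishing on any Hodge component.

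For each unipotent type I would write down the Deligne splitting $V_0=\bigoplus I^{p,q}$ of the limit mixed Hodge structure. The Hodge numbers $(1,1,1,1)$ force $\sum_q\dim I^{p,q}=1$ for each $p\in\{0,1,2,3\}$; combined with the symmetry $\overline{I^{p,q}}=I^{q,p}$, transversality $N(I^{p,q})\subset I^{p-1,q-1}$, and the prescribed Jordan form of $N$, the diamond is pinned down uniquely in each case:
\[\begin{array}{ll}
T_I: & I^{3,0}\oplus I^{2,2}\oplus I^{1,1}\oplus I^{0,3},\ \ N\colon I^{2,2}\to I^{1,1};\\
T_{II}: & I^{3,1}\oplus I^{2,0}\oplus I^{1,3}\oplus I^{0,2},\ \ N\colon I^{3,1}\to I^{2,0},\ N\colon I^{1,3}\to I^{0,2};\\
T_{III}: & I^{3,3}\oplus I^{2,2}\oplus I^{1,1}\oplus I^{0,0},\ \ N\colon I^{3,3}\to I^{2,2}\to I^{1,1}\to I^{0,0}.
\end{array}\]
In the paper's centering at $0$ one has $W_0=\bigoplus_{p+q\le 3}I^{p,q}$ and $W_{-2}=\bigoplus_{p+q\le 1}I^{p,q}$, so $W_{-2}=0$ at types $I$ and $II$ while $W_{-2}=I^{0,0}$ at type $III$. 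The limit Hodge filtration is $F^p=\bigoplus_{p'\ge p}I^{p',q}$, so each $E^{p,3-p}|_P=F^p/F^{p+1}$ is represented by a single $I^{p,q}$ in the diamond.

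The Hodge-graded pieces of $W_*+tE$ now follow: if $E^{p,3-p}|_P$ is represented by an $I^{p,q}\subset W_*$ there is no twist; otherwise the local section is forced to vanish at $P$, producing a twist by $-P$. This gives $-P$-twists on $\Omega^0_{(2)}(E)^{3,0}$ at $P\in II\cup III$, on $\Omega^0_{(2)}(E)^{2,1}$ at $I\cup III$, on $\Omega^0_{(2)}(E)^{1,2}$ only at $II$, and never on $\Omega^0_{(2)}(E)^{0,3}$. The analogous analysis for $\Omega^1_{(2)}(E)^{p,3-p}$, using $W_{-2}$ in place of $W_0$, twists every Hodge component by $-P$ at $P\in I\cup II\cup III$, except that the $(0,3)$-component picks up no twist at $III$ since $I^{0,0}\in W_{-2}$ there. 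Rewriting these $\Omega^1_{\bar S}(\log D)$-twists against $\Omega^1_{\bar S}$ absorbs a global $-D$ and converts the remaining negative divisors into the positive divisors $+IV$ and $+(III+IV)$ appearing in the statement. The only genuine work is the identification of the Deligne diamonds above; this is a short combinatorial check from the three constraints listed, already implicit in the normal-form analysis of \cite[Sect.~1]{hanoi}.
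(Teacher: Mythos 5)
Your proposal is correct and follows essentially the same route as the paper, whose entire proof is the one-line remark that the claim follows ``exactly as in the cases $m=1$ and $m=2$'' from Zucker's Props.~4.1 and 6.9. You supply the local details the paper omits --- the identification of the Deligne diamonds for types $I$, $II$, $III$ and the resulting computation of $W_0$ and $W_{-2}$ in each Hodge component --- and these check out against the stated twists.
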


\begin{proof}
The proof is exactly as in the cases $m=1$ and $m=2$ using \cite[Props. 4.1 and 6.9.]{zucker}. 
\end{proof}

In summary, we get the following result, which agrees with \cite[Prop 3.6.]{hanoi} in the unipotent case.

\begin{thm} \label{m3} Assume that $\V$ is irreducible, and that 
$\vartheta: E^{3,0} \to E^{2,1}\otimes \Omega^1_ {\bar{S}}(\log D)$ as well as 
$\vartheta: E^{2,1} \to E^{1,2}\otimes \Omega^1_ {\bar{S}}(\log D)$
and $\vartheta: E^{1,2} \to E^{0,3}\otimes \Omega^1_ {\bar{S}}(\log D)$ are non-zero maps with $a+|IV|>0$, 
where $a:=\deg E^{3,0}$ and $b:=\deg E^{2,1}$.  
Then the Hodge numbers for the pure Hodge structure of weight $4$ on $H^1(\bar{S},j_*\V)$ are 
$$
h^{4,0}=h^{0,4}=g-1+a+|IV|, \; h^{3,1}=h^{1,3}=2g-2+b-a+|II|+|III|+|IV|, 
$$
$$ 
h^{2,2}= |I|+|III|-2b+2g-2. 
$$ 
In total, one has 
$$
h^1(j_*\V)=8g-8 + |I| + 2|II| + 3|III|+ 4|IV|.
$$
\end{thm}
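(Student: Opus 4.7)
The plan is to imitate the arguments of Theorems~\ref{m1} and~\ref{m2}. The Hodge filtration on the $L^2$-Higgs complex described in the preceding theorem breaks it into graded pieces, one for each $p$ with $0\le p\le 4$, of the form
$$
\Omega^0_{(2)}(E)^{p,\,3-p}\xrightarrow{\ \vartheta\ }\Omega^1_{(2)}(E)^{p-1,\,4-p},
$$
a two-term complex of line bundles on $\bar S$ whose hypercohomology $\H^1$ gives the Hodge component $h^{p,4-p}$ of $H^1(\bar S,j_*\V)$. I would compute the cases $p=4$ and $p=3$ directly and then pin down $h^{2,2}$ by a total-dimension check, using Hodge symmetry to recover $p=0,1$.

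For the edge piece $p=4$ the source vanishes, so $\H^1=H^0(E^{3,0}(IV)\otimes\Omega^1_{\bar S})$. Serre duality identifies the obstruction with $H^0((E^{3,0})^{-1}(-IV))$, which vanishes under the hypothesis $a+|IV|>0$, so Riemann-Roch yields the stated value of $h^{4,0}$; Hodge symmetry then gives $h^{0,4}=h^{4,0}$. For $p=3$ the assumed nonvanishing of $\vartheta$ forces the displayed map of line bundles to be injective as a sheaf map, making the complex quasi-isomorphic to its torsion cokernel placed in degree $1$. Hence $\H^1$ equals the length of that cokernel, i.e.\ the difference $\deg(E^{2,1}(IV)\otimes\Omega^1_{\bar S})-\deg(E^{3,0}(-II-III))$, and a short arithmetic yields the claimed formula for $h^{3,1}$, with $h^{1,3}=h^{3,1}$ by Hodge symmetry.

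For $h^{2,2}$ I would avoid computing the middle graded piece head-on, since doing so would require expressing $\deg E^{1,2}$ in terms of $b$ via the polarization pairing (a non-trivial input in the non-unipotent case). Instead, I would follow the checking-sum strategy of Theorem~\ref{m2}: apply the Euler-characteristic formula
$$
\chi(\bar S,j_*\V)=r\,\chi_c(S)+\sum_{P\in D}\dim V^{T_P},
$$
invoke irreducibility of $\V$ to conclude that $H^0(\bar S,j_*\V)$ and (by duality) $H^2(\bar S,j_*\V)$ both vanish, so that $h^1(j_*\V)=-\chi$. Reading off the dimensions of the fixed subspaces from the Jordan forms in the preceding lemma gives $\dim V^{T_P}=3,2,1,0$ in cases $I, II, III, IV$ respectively, which produces the stated value of $h^1(j_*\V)=8g-8+|I|+2|II|+3|III|+4|IV|$. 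Subtracting the four outer Hodge numbers leaves exactly the claimed formula for $h^{2,2}$.

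I expect the main obstacle to be this final bookkeeping: one must be careful to verify the invariant dimensions in each Jordan type from the preceding lemma and justify the vanishing of $H^0$ (hence $H^2$) from irreducibility of $\V$. Once those two ingredients are in place, the theorem reduces to elementary arithmetic combined with Riemann-Roch on the outer graded pieces of the Hodge-filtered Higgs complex.
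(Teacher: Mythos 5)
Your proposal is correct and follows essentially the same route as the paper: Riemann--Roch on the edge graded piece (with the vanishing of $h^1$ guaranteed by $a+|IV|>0$), a degree count on the torsion cokernel for $h^{3,1}$, and a checking sum against the Euler-characteristic formula to pin down $h^{2,2}$ --- which is exactly how the paper completes that term in Remark~\ref{degreeremark}, citing \cite{hanoi} for the total $h^1(j_*\V)$ that you derive explicitly. The only cosmetic difference is that you obtain $h^{4,0}$ directly via Serre duality rather than by first invoking Hodge symmetry with $h^{0,4}$.
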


\begin{proof} The Higgs complex is given by
$$
\begin{xy}
\xymatrix{
E^{3,0}(-II-III)  \ar[rd]^{\vartheta}_{\neq 0} &   E^{2,1}(-I-III)  \ar[rd]^{\vartheta}_{\neq 0}  & E^{1,2}(-II) \ar[rd]^{\vartheta}_{\neq 0}  &  E^{0,3} \\
E^{3,0}(IV)\otimes  \Omega^1_{\bar{S}} &  E^{2,1}(IV) \otimes  \Omega^1_{\bar{S}} & E^{1,2}(IV)\otimes  \Omega^1_{\bar{S}} &  E^{0,3}(III+IV) \otimes  \Omega^1_{\bar{S}} } 
\end{xy}
$$
Note that both $\Omega^0_{(2)}(E)^{0,3}=E^{0,3}$ and $\Omega^1_{(2)}(E)^{3,0}=E^{3,0}(IV) \otimes \Omega^1_{\bar{S}}$ 
have neither incoming nor outgoing Higgs differential. Hodge duality, i.e., $h^{4,0}=h^{0,4}$, 
implies $h^0(E^{3,0}(IV)\otimes \Omega^1_{\bar{S}})=h^1(E^{0,3})$. 
Riemann-Roch applied to $E^{3,0}(IV)$ then gives the formula for $h^{4,0}=h^{0,4}$ under the assumption $a+|IV|>0$. 

As in \cite{hanoi} the space $H^{3,1}$ is represented as global sections of the cokernel of the map 
$\Omega_{(2)}^0(E)^{3,0}\stackrel{\theta}{\lra} \Omega_{(2)}^1(E)^{2,1}$, hence we have to count the zeros 
of a map of line bundles
\[E^{3,0}(-II-III) \lra E^{2,1}(IV) \otimes \Omega_{\bar S}^1.\]
This number is therefore given by the difference in degrees of the line bundles, i.e., 
\[h^{3,1}=h^{1,3}=\deg E^{2,1}(IV) \otimes \Omega_{\bar S}^1-\deg E^{3,0}(-II-III)=b+2g-2-a+|II|+|III|+|IV|.\]

In a similar way, $H^{2,2}$ is represented as global sections of the cokernel of the map 
$\Omega_{(2)}^{0}(E)^{2,1} \stackrel{\theta}{\lra} \Omega_{(2)}^1(E)^{1,2}$, hence we have to 
count the zeros of the map of line bundles
\[ E^{2,1}(-I-III) \lra E^{1,2}(+IV) \otimes \Omega_{\bar S}^1. \]
\end{proof}

\begin{remark} \label{degreeremark} Condition $a+|IV|>0$ again follows in many cases, see Remark \ref{pardeg}. 
Assume that $\bar{S}=\P^1$ and that $\deg(E^{0,3})\le -2$.
The proof states that $h^1(E^{0,3})=h^0((E^{0,3})^\vee \otimes \Omega^1_ {\bar{S}})=h^0(E^{3,0}(IV)\otimes \Omega^1_ {\bar{S}})$. 
This implies that $E^{0,3}=(E^{3,0})^{-1}(-IV)$. Hence, if $a':=-\deg E^{0,3}$, one has $a'=a+|IV|$.

It is not clear that $\deg E^{1,2}=-\deg E^{2,1}$. 
Indeed let $b':= - \deg E^{1,2}$. We obtain $h^{2,2}= |I|+|III|+|IV|-b-b'+2g-2$. 

Now we use a checking sum: By \cite[Prop 3.6.]{hanoi} we know that 
$$
h^1(j_*\V)=h^{4,0}+h^{3,1}+h^{2,2}+h^{1,3}+h^{0,4}=8g-8 + |I| + 2|II| + 3|III|+ 4|IV|,
$$
since by our assumption non--unipotent local monodromies have zero invariant subspace. 
This implies that $b'=b+|IV|$.
\end{remark}

Using the formulas obtained above, one can revisit the tables for Hodge numbers in \cite{hanoi} and add the degrees $a$ and $b$ of the Hodge bundles (see table). 
In the table, $e$ is the degree of a covering map $\P^1 \to \P^1$ of the form $z \mapsto z^e$ ramified in $0$ and $\infty$. 
The numbering follows the database \cite{AESZ}.

In the following sections, we need in addition the following upper bound for $a$ from the work of Jost and Zuo:

\begin{thm}\cite[Theorem 1]{jz} \label{jzthm} 
\[
\deg E^{3,0} \le \left( \frac{1}{2} (h^{2,1}-h^{2,1}_0) + (h^{3,0}-h^{3,0}_0)\right)(2g-2+\sharp D),  
\]
where a subscript $0$ denotes the kernel of $\vartheta$. More generally, if $\V$ is a real VHS of odd weight $k=2l+1 \ge 1$, then one has
\[
\deg E^{k,0} \le  \left( \frac{1}{2} (h^{k-l,l}-h^{k-l,l}_0) + \sum_{j=0}^ {l-1} (h^{k-j,j}-h^{k-j,j}_0)\right)(2g-2+\sharp D).
\]
 
\end{thm}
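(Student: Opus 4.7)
Plan: The strategy is to exploit Simpson's correspondence, which asserts that the parabolic Higgs bundle $(E,\theta)$ underlying the polarized VHS $\V$ is polystable of parabolic degree zero; in particular, every saturated Higgs subbundle $F\subset E$ satisfies $\deg_p F\le 0$. The Arakelov-type upper bound on $\deg E^{k,0}$ will then be extracted by making a clever choice of $F$ built out of $E^{k,0}$ and its images under iterated Higgs fields.

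First I would observe that $E^{k,0}_0:=\ker\bigl(\theta:E^{k,0}\to E^{k-1,1}\otimes\Omega^1_{\bar S}(\log D)\bigr)$ is itself a Higgs subbundle (since $\theta$ kills it), so $\deg_p E^{k,0}_0\le 0$; passing to the quotient, we may assume $\theta$ is injective on the top piece. Iterating $\theta$ then yields generically injective maps
$$
\theta^{j}:E^{k,0}/E^{k,0}_0\longrightarrow E^{k-j,j}\otimes(\Omega^1_{\bar S}(\log D))^{\otimes j},\qquad j=1,\ldots,l,
$$
whose images, after untwisting by $(\Omega^1_{\bar S}(\log D))^{-j}$, define subsheaves $F_j\subset E^{k-j,j}$ of rank $h^{k-j,j}-h^{k-j,j}_0$. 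Each step costs one copy of $\Omega^1_{\bar S}(\log D)$, producing a term of the form $\mathrm{rk}(F_j)(2g-2+\sharp D)$ in the degree comparison.

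The key subtlety that produces the factor $\tfrac{1}{2}$ in front of $h^{k-l,l}-h^{k-l,l}_0$ comes from the real structure: for odd weight $k=2l+1$, the polarization identifies $E^{l,l+1}$ with a twisted dual of $E^{l+1,l}$, and under this identification the middle map $\theta:E^{l+1,l}\to E^{l,l+1}\otimes\Omega^1_{\bar S}(\log D)$ becomes self-adjoint. Consequently only a Lagrangian-type "half" of the middle piece can be incorporated into a Higgs subbundle, giving exactly the coefficient $\tfrac12$. Assembling these pieces into a saturated Higgs subbundle
$$
F\;=\;\bigoplus_{j=0}^{l-1}F_j\;\oplus\;F_{\mathrm{mid}}\;\subset\;E,
$$
and feeding it into $\deg_p F\le 0$, gives after collecting the contributions from each $F_j$ and from $F_{\mathrm{mid}}$ precisely the stated inequality.

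The main obstacle, and the place where the proof needs real care, is the bookkeeping of parabolic weights at the boundary points of $D$---they have to combine cleanly with the logarithmic twist $\Omega^1_{\bar S}(\log D)$ so as to absorb the quasi-unipotent contributions into the single factor $2g-2+\sharp D$---together with the precise construction of $F_{\mathrm{mid}}$ ensuring that $F$ is genuinely $\theta$-stable and saturated. Once those points are settled, the stated bound follows by pure bookkeeping from the Simpson inequality $\deg_p F\le 0$.
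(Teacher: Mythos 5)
The first thing to say is that the paper does not prove this statement at all: Theorem~\ref{jzthm} is quoted verbatim from Jost--Zuo \cite[Thm.~1]{jz} and used as a black box, so there is no in-paper proof to compare yours against. Your sketch does reconstruct the genuine strategy of the cited proof --- Simpson's correspondence forces every saturated Higgs subsheaf of the parabolic Higgs bundle of a polarized VHS to have nonpositive parabolic degree, one applies this to the Higgs subsheaf generated by $E^{k,0}$ under iterated $\vartheta$, and the polarization is responsible for the coefficient $\tfrac12$ on the middle term. So the plan is sound in outline.

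However, as written it is a plan rather than a proof, and the two places where it is vague are exactly where the theorem lives. First, the rank claim is wrong as stated: if $F_0=E^{k,0}/E^{k,0}_0$ and $F_{j+1}$ is (the saturation of) $\vartheta(F_j)$ untwisted, there is no reason that $\mathrm{rk}\,F_j$ \emph{equals} $h^{k-j,j}-h^{k-j,j}_0$; what one actually has is the inequality $\mathrm{rk}\,F_{j+1}\le \mathrm{rk}\,\vartheta(E^{k-j,j})=h^{k-j,j}-h^{k-j,j}_0$, since $F_j\subset E^{k-j,j}$. It is this inequality, with the index shift it entails, that explains why the sum in the statement runs over $j=0,\dots,l-1$ (each summand bounds the rank of the \emph{next} piece of the chain), and your telescoping degree count needs to be set up against these upper bounds, together with the observation that each $\ker(\vartheta|_{F_j})$ is itself a Higgs subsheaf of nonpositive parabolic degree. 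Second, the factor $\tfrac12$ is asserted via a ``Lagrangian-type half'' heuristic and the object $F_{\mathrm{mid}}$ is never constructed; in the actual argument this step requires playing the subsheaf $F$ off against its orthogonal complement under the polarization (both being Higgs subsheaves of nonpositive parabolic degree), and it is the genuinely delicate part of the theorem. Together with the deferred parabolic-weight bookkeeping at the points of $D$, these are not routine gaps: filling them is essentially reproving Jost--Zuo's theorem, which is presumably why the authors chose to cite it rather than prove it.
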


If we assume that all maps $\vartheta$ are non-zero (except the one on $E^{0,3}$), and all ranks $h^{p,q}$ are $1$ as in our case, 
then the inequality simply becomes:
\[
\deg E^{3,0} \le \frac{3}{2} (2g-2+\sharp D).
\]
In the case $\bar{S}=\P^1$ we therefore obtain $\deg E^{3,0} \le \frac{3}{2} (\sharp D-2)$. 
In the case of $3$ singular points, we get $\deg E^{3,0} \le \frac{3}{2}$, hence $a=\deg E^{3,0} \le 1$.

\newpage 

\begin{small}
\[
\begin{array}{|r|l|l||c|c|c|c|c|c|c|}
\hline
\#&\textup{Model}&T_\infty&e&h^1(j_*\V)&h^{4,0}&h^{3,1}&h^{2,2}&a&b\\
\hline
1& \P^4[5]  & IV &1&0&0&0&0&0&0 \\
&           &  &2&1&0&0&1&0&0 \\
&           &  &5&0&0&0&0&1&2\\
&           &  &10&1&1&1&1&2&4\\
\hline
2& \P(1,1,1,2,5)[10]          & IV  &1&0&0&0&0&0&0\\
&           &  &2&1&0&0&1&0&0\\
&           &  &5&4&0&0,1,2&4,2,0&0&0,1,2\\
&           &  &10&5&0&0,1,2&5,3,1&1&2,3,4\\
\hline
3& \P^7[2,2,2,2]  & III  &1&0&0&0&0&0&0\\
&          &   &2&0&0&0&0&1&1\\
&          &   &2k&2k-2&k-1&0&0&k&k\\
\hline   
4&    \P^5[3,3]     & II    &1&0&0&0&0&0&0\\
&       &      &2&1&0&0&1&0&0\\
&       &      &3&0&0&0&0&1&1\\
&       &      &6&3&1&0&1&2&2\\
\hline
5&   \P^6[2,2,3]       & I   &1&0&0&0&0&0&0\\
&           &  &6&2&0&0\mbox{ or }1&2\mbox{ or }0&1&2\mbox{ or }3\\
\hline               
6& \P^5[2,4]        & I    &1&0&0&0&0&0&0\\
&          &   &4&0&0&0&0&1&2\\
&          &   &8&4&1&1&0&2&4\\
\hline
7& \P(1,1,1,1,4)[8]         &IV   &1&0&0&0&0&0&0\\
&           &  &2&1&0&0&1&0&0\\
&           &  &4&3&0&0\mbox{ or }1&3\mbox{ or }1&0&0\mbox{ or }1\\
&           &  &8&3&0&0\mbox{ or }1&3\mbox{ or }1&1&2 \mbox{ or } 3\\
\hline
8&  \P(1,1,1,1,2)[6]      &IV    &1&0&0&0&0&0&0\\
&          &   &2&1&0&0&1&0&0\\
&          &   &6&1&0&0&1&1&2\\
\hline
9&  \P(1,1,1,1,4,6)[2,12]       &IV    &1&0&0&0&0&0&0\\
&          &   &2&1&0&0&1&0&0\\
&          &   &3&2&0&0\mbox{ or }1&2\mbox{ or }0&0&0\mbox{ or }1\\
&          &   &4&3&0&0\mbox{ or }1&3\mbox{ or }1&0&0\mbox{ or }1\\
&          &   &6&5&0&0,1,2&5,3,1&0&0,1,2\\
&          &   &12&7&0&0,1,2,3&7,5,3,1&1&2,3,4,5\\
\hline
10& \P(1,1,1,1,2,2)[4,4]        & II    &1&0&0&0&0&0&0\\
&        &     &2&1&0&0&1&0&0\\
&        &     &4&1&0&0&1&1&1\\
&        &     &8&5&1&0&3&2&2\\
\hline
11& \P(1,1,1,2,2,3)[4,6]        &IV    &1&0&0&0&0&0&0\\
&         &   &2&1&0&0&1&0&0\\
&         &    &12&7&0&0,1,2,3&7,5,3,1&1&2,3,4,5\\
\hline
12&   \P(1,1,1,1,1,2)[3,4]       &IV   &1&0&0&0&0&0&0\\
&          &   &2&1&0&0&1&0&0\\
&          &   &3&2&0&0\mbox{ or }1&2\mbox{ or }0&0&0\mbox{ or }1\\
&          &   &12&7&0&0,1,2,3&7,5,3,1&1&2,3,4,5\\
\hline
13&    \P(1,1,2,2,3,3)[6,6]     & II      &1&0&0&0&0&0&0\\
&       &   &2&1&0&0&1&0&0\\
&       &      &3&2&0&0\mbox{ or }1&2\mbox{ or }0&0&0\mbox{ or }1\\
&       &      &6&3&0&0\mbox{ or }1&3\mbox{ or }1&1&1\mbox{ or }2\\
\hline
14&  \P(1,1,1,1,1,3)[2,6]        & I   &1&0&0&0&0&0&0\\                                  
&          &     &3&2&0&0\mbox{ or }1&2\mbox{ or }0&0&0\mbox{ or }1\\
&          &   &6&2&0&0\mbox{ or }1&2\mbox{ or }0&1&2\mbox{ or }3\\
\hline
\end{array}
\]
\end{small}

\section{Rohde's example}

In \cite{rohde,bert} one finds examples of one-dimensional families $f: X \to S$ of certain Calabi-Yau $3$-folds. 
Their construction is induced by a Borcea-Voisin method, i.e., is obtained from a product of a fixed elliptic curve $E$ and a K3 surface $S_\lambda$
by application of certain automorphisms. To describe the underlying VHS, in 
section 2 of \cite{bert} a family of genus two Picard curves $C_\lambda$ is constructed, given by a triple covering $C_\lambda \to \P^1$, and thus coming
with an automorphism $\xi$ of order three. 
The cohomology $H^1(C_\lambda,\Q)$ has an eigenspace decomposition according to the eigenvalues $\xi$ and $\bar{\xi}=\xi^2$ and 
it is strongly related to the cohomology of the fibers of $f$. Namely, one has 
$$
H^{3,0}(X_\lambda,\Q)= H^{1,0}(C_\lambda)_{\bar{\xi}}, \; H^{2,1}(X_\lambda,\Q)= H^{0,1}(C_\lambda)_{\bar{\xi}}, 
$$
$$
H^{1,2}(X_\lambda,\Q)= H^{1,0}(C_\lambda)_{\xi}, \; H^{0,3}(X_\lambda,\Q)= H^{0,1}(C_\lambda)_{\xi}.
$$
Furthermore, the family $C_\lambda$ is induced from a Shimura family, see \cite{bert}. As a consequence, the Higgs map $\vartheta$ induces non-zero maps
$$
\vartheta: E^{3,0} \longrightarrow E^{2,1} \otimes \Omega^1_{\bar{S}}(\log D), \;
\vartheta: E^{1,2} \longrightarrow E^{0,3} \otimes \Omega^1_{\bar{S}}(\log D)
$$
induced by the corresponding Higgs fields for the family $C_\lambda$, and the zero morphism 
$$
\vartheta: E^{2,1} {\buildrel 0 \over \longrightarrow}  E^{1,2} \otimes \Omega^1_{\bar{S}}(\log D), 
$$
by noting that Higgs fields respect eigenspace decompositions. 

In this case one knows a little bit more about $a$ and $b$: 
One has $\bar{S}=\P^1$ and $\sharp D=3$ singular points, one of them of type $IV$. Hence $|IV|=1$ and $|II|=2$ in our case.
This follows from \cite[Sec. 2]{bert} from the fact that the resulting Picard-Fuchs equation is a classical hypergeometric equation with singularities at $0,1,\infty$.
Let $F=F^{1,0}\oplus F^{0,1}$ be the Higgs bundle associated to the variation of the genus two curves $C_\lambda$. 
Then $F$ decomposes according to eigenspaces, i.e., $F=F_\xi \oplus F_{\bar \xi}$. 
Due to the existence of non-unipotent points, $F^{1,0}_\Box$ and $F^{0,1}_\Box$ for $\Box \in \{\xi,\bar{\xi}\}$ are not dual to each other. One has: 

\begin{lemma} In Rohde's example, each rank two Higgs bundle $F_\Box$ has a maximal Higgs field, i.e., 
$$
\vartheta: F^{1,0}_\Box {\buildrel \cong \over \longrightarrow} F^{0,1}_\Box \otimes \Omega^1_{\P^1}(\log D).
$$
is an isomorphism. Furthermore, $\deg F^{1,0}_\Box =0$ and $\deg F^{0,1}_\Box =-1$. 
\end{lemma}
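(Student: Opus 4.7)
The plan is to combine Jost--Zuo's Arakelov-type inequality (Theorem \ref{jzthm}) with the Shimura origin of the family $C_\lambda$ and the formula for the degree of Deligne's canonical extension. Throughout, $\bar S=\P^1$ and $\sharp D=3$ (two type $II$ points and one type $IV$ point), so $\Omega^1_{\P^1}(\log D)\cong\sO_{\P^1}(1)$, and each eigen-factor $F_\Box=F^{1,0}_\Box\oplus F^{0,1}_\Box$ is a rank-two weight-one VHS with all Hodge numbers equal to one.

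First I would apply Theorem \ref{jzthm} (with $k=1$, $l=0$) to $F_\Box$, obtaining $\deg F^{1,0}_\Box\le\tfrac12(2g-2+\sharp D)=\tfrac12$, so $\deg F^{1,0}_\Box\le 0$ by integrality. Equality is equivalent to $\vartheta$ being an isomorphism of line bundles and characterises VHS pulled back from a Shimura curve for $U(1,1)$. Since by \cite{bert} the family $C_\lambda$ is of exactly this Shimura type, with $F_\xi$ and $F_{\bar\xi}$ its two standard representations, the Arakelov bound is saturated, so
$$
\vartheta\colon F^{1,0}_\Box\xrightarrow{\;\sim\;}F^{0,1}_\Box\otimes\Omega^1_{\P^1}(\log D)
$$
is an isomorphism. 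In particular $\deg F^{1,0}_\Box=\deg F^{0,1}_\Box+1$.

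To pin down the individual degrees I would compute $\deg\bar F_\Box$ directly via Deligne's canonical extension: $\deg\bar F_\Box=-\sum_P\mathrm{Tr}(\mathrm{Res}_P\bar\nabla)$, where the residues have eigenvalues normalised in $[0,1)$. At the two type $II$ points the residue is nilpotent and contributes zero, while at the type $IV$ point the two eigenvalues of the local monodromy on $F_\Box$ form a pair of primitive cube roots of unity $\zeta_3,\zeta_3^{-1}$, read off from the hypergeometric Picard--Fuchs equation at $\infty$ (whose parameters are forced by the $\Z/3$-isotypic structure of a Picard curve); the trace of the residue is therefore $\tfrac13+\tfrac23=1$. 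Hence $\deg\bar F_\Box=\deg F^{1,0}_\Box+\deg F^{0,1}_\Box=-1$, and combined with $\deg F^{1,0}_\Box=\deg F^{0,1}_\Box+1$ this forces $\deg F^{1,0}_\Box=0$ and $\deg F^{0,1}_\Box=-1$.

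The main obstacle is the first step: verifying that $\vartheta$ does not vanish on either eigen-factor, i.e.\ that the parabolic Arakelov equality is attained. In the fully unipotent setting this is the classical Viehweg--Zuo characterisation of Shimura families; in the quasi-unipotent situation of Rohde's example one must either appeal to a parabolic refinement of that characterisation, or verify the non-vanishing of the Kodaira--Spencer map directly from the hypergeometric description of $C_\lambda$ in \cite{bert}, using that the Higgs field respects the $\Z/3$-eigenspace decomposition.
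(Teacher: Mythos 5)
There is a genuine gap in the first (and decisive) step. Your argument for the isomorphism rests on ``Arakelov equality characterises Shimura curves,'' but the ordinary Arakelov equality is \emph{not} attained here: the bound is $\deg F^{1,0}_\Box\le\tfrac12(2g-2+\sharp D)=\tfrac12$, while the actual degree is $0$, so equality fails for trivial integrality reasons and the Viehweg--Zuo-type characterisation does not apply as stated; the parabolic refinement you would need in this quasi-unipotent setting is exactly what you flag as unverified. Without the isomorphism you only know $\vartheta\neq 0$, which gives $\deg F^{1,0}_\Box\le\deg F^{0,1}_\Box+1$; combined with your (correct, and in fact forced by integrality of the degree) residue computation $\deg F^{1,0}_\Box+\deg F^{0,1}_\Box=-1$ and the Jost--Zuo bound $\deg F^{1,0}_\Box\le 0$, the system is underdetermined: writing $x=\deg F^{1,0}_\Box$, the three relations collapse to $x\le 0$, so you cannot conclude $x=0$. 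The missing ingredient is a \emph{lower} bound $\deg F^{1,0}_\Box\ge 0$ obtained independently of the isomorphism.

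The paper supplies exactly this lower bound by a different mechanism, and reverses the logical order: the isomorphism is the \emph{output}, not the input. Since $\W_\Box$ is irreducible and non-trivial, $H^0(\P^1,j_*\W_\Box)=H^2(\P^1,j_*\W_\Box)=0$ (the argument of Prop.~3.6 of \cite{hanoi}). Feeding this through the weight-one $L^2$-Higgs complex of Theorem \ref{m1} (with one non-unipotent point, namely the type $IV$ point of $E$) gives $h^1(\P^1,F^{1,0}_\Box(1)\otimes\Omega^1_{\P^1})=0$, hence $\deg F^{1,0}_\Box\ge 0$, so $F^{1,0}_\Box={\mathcal O}_{\P^1}$; and $h^0(F^{0,1}_\Box)=0$ gives $\deg F^{0,1}_\Box<0$, which with $\vartheta\neq 0$ (already known from the Shimura description in \cite{bert} --- this is all the paper takes from there) forces $\deg F^{0,1}_\Box=-1$. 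The isomorphism then follows because $\vartheta$ is a non-zero map between line bundles of equal degree. If you want to salvage your route, replace the Arakelov-equality appeal by this cohomological vanishing (or by any other independent proof that $\deg F^{1,0}_\Box\ge 0$); your residue computation of $\deg\bar F_\Box=-1$ can then stand in for the paper's separate determination of $\deg F^{0,1}_\Box$.
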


\begin{proof} Thm. \ref{jzthm}, i.e., the Arakelov inequality of Jost and Zuo \cite[Thm. 1]{jz}, implies that $\deg F_\Box^{1,0} \le \frac{1}{2}$, hence $\deg F_\Box^{1,0} \le 0$. 
On the other hand, one has $\deg F_\Box^{1,0} \ge 0$: Consider the local system $\W_\Box$ corresponding to $F_\Box$. It satisfies $h^2(\P^1,j_*\W_\Box)=h^0(\P^1,j_*\W_\Box)=0$ 
by the argument of \cite[Prop 3.6.]{hanoi}. The Higgs complex for $F_\Box$ is given by
$$
\begin{xy}
\xymatrix{
F_\Box^{1,0}(-I)  \ar[rd]^{\vartheta}_{\neq 0} &   F_\Box^{0,1}   \\
F_\Box^{1,0}(II)\otimes  \Omega^1_{\P^1} &  F_\Box^{0,1}(II) \otimes  \Omega^1_{\P^1}  } 
\end{xy}
$$
as in the proof of Theorem \ref{m1}. 
Therefore, $H^1(\P^1,F_\Box^{1,0}(II)\otimes\Omega^1_{\P^1})$ is the direct summand of Hodge type $(2,1)$ inside $H^2(\P^1,j_*\W_\Box)=0$.
Since $|II|=1$, we obtain $0=h^1(\P^1,F_\Box^{1,0}(1)\otimes\Omega^1_{\P^1})=h^0(\P^1,(F_\Box^{1,0})^{-1}(-1))$. Therefore $F_\Box^{1,0}={\mathcal O}_{\P^1}$.
In a similar way, $H^0(F_\Box^{0,1})$ contributes to $H^0(\P^1,j_*\W_\Box)=0$, therefore $\deg F_\Box^{0,1}<0$. 
Since $\vartheta$ is a non-zero map, and $\deg(\Omega^1_{\P^1}(\log D))=1$, we get that $\deg F_\Box^{0,1}=-1$ and $F_\Box^{0,1}={\mathcal O}_{\P^1}(-1)$.
\end{proof}

\begin{cor} It follows that for the Higgs bundle $E$ one has $a=0$ and $b=-1$.
Furthermore, the Higgs maps 
$\vartheta: E^{3,0} \longrightarrow E^{2,1} \otimes \Omega^1_{\P^1}(\log D)$, and 
$\vartheta: E^{1,2} \longrightarrow E^{0,3} \otimes \Omega^1_{\P^1}(\log D)$ are both isomorphisms.
\end{cor}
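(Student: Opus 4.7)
The plan is to deduce the corollary as an immediate translation of the preceding lemma, via the Borcea-Voisin eigenspace identification recalled at the start of this section.

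First I would promote the fiberwise identifications $H^{3,0}(X_\lambda)=H^{1,0}(C_\lambda)_{\bar\xi}$, $H^{2,1}(X_\lambda)=H^{0,1}(C_\lambda)_{\bar\xi}$, $H^{1,2}(X_\lambda)=H^{1,0}(C_\lambda)_\xi$, $H^{0,3}(X_\lambda)=H^{0,1}(C_\lambda)_\xi$ to isomorphisms of line bundles on $\bar S=\P^1$
$$E^{3,0}=F^{1,0}_{\bar\xi},\qquad E^{2,1}=F^{0,1}_{\bar\xi},\qquad E^{1,2}=F^{1,0}_\xi,\qquad E^{0,3}=F^{0,1}_\xi,$$
and check that these identifications are compatible with the Higgs fields, which follows from the fact that Higgs fields respect the $\langle\xi\rangle$-eigendecomposition (this observation was already used in this section to produce the zero middle map $\vartheta\colon E^{2,1}\to E^{1,2}\otimes\Omega^1_{\P^1}(\log D)$).

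Second, I would just read off the numerics from the lemma, which asserts $\deg F^{1,0}_\Box=0$ and $\deg F^{0,1}_\Box=-1$ for both $\Box\in\{\xi,\bar\xi\}$. This yields $a=\deg E^{3,0}=0$ and $b=\deg E^{2,1}=-1$. For the two Higgs maps in the statement of the corollary, the identifications above convert
$$\vartheta\colon E^{3,0}\longrightarrow E^{2,1}\otimes\Omega^1_{\P^1}(\log D)\qquad\text{and}\qquad \vartheta\colon E^{1,2}\longrightarrow E^{0,3}\otimes\Omega^1_{\P^1}(\log D)$$
into the Higgs fields of $F_{\bar\xi}$ and of $F_\xi$ respectively, and both of these are isomorphisms by the lemma.

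The only point that requires more than bookkeeping is that the eigenspace decomposition, which is defined a priori only on the open curve $S$, must extend to the Deligne canonical logarithmic extensions at the three boundary points $0,1,\infty$. This should be automatic because $\xi$ is an automorphism of the family of finite order: it commutes with the Gauss-Manin connection and therefore extends uniquely to the canonical extensions of the local systems $R^1f_*\C$ and $R^3f_*\C$, preserving their Hodge filtrations. I expect this compatibility to be the only substantive verification; everything else is a direct transcription of the lemma.
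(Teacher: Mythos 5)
Your proposal is correct and matches the paper's intent exactly: the corollary is stated without proof precisely because it is the transcription of the lemma through the eigenspace identifications $E^{3,0}=F^{1,0}_{\bar\xi}$, $E^{2,1}=F^{0,1}_{\bar\xi}$, $E^{1,2}=F^{1,0}_{\xi}$, $E^{0,3}=F^{0,1}_{\xi}$ already set up at the start of the section. Your added remark that the finite-order automorphism commutes with the Gau{\ss}--Manin connection and hence the eigendecomposition passes to the canonical extensions is a worthwhile point of care that the paper leaves implicit.
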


Note that the identities for $a'=a+|IV|$, $b'=b+|IV|$ in Remark \ref{degreeremark} still hold. 

The properties of $E$ we have shown are summarized in the following definition. 

\begin{definition}
A logarithmic Higgs bundle $E=E^{3,0} \oplus E^{2,1} \oplus E^{1,2} \oplus E^{0,3}$ of weight $m=3$ and rank $4$ on $\bar{S}$ is called
decomposed, if $\vartheta: E^{3,0} \to  E^{2,1} \otimes \Omega^1_{\bar{S}}(\log D)$ and 
$\vartheta: E^{1,2} \to E^{0,3} \otimes \Omega^1_{\bar{S}}(\log D)$ are isomorphisms, and 
$\vartheta: E^{2,1} \to  E^{1,2} \otimes \Omega^1_{\bar{S}}(\log D)$ is the zero map. 
\end{definition}

\begin{thm} \label{m3bis}
The $L^2$-Higgs cohomology of a decomposed Higgs bundle $E=E^{3,0} \oplus E^{2,1} \oplus E^{1,2} \oplus E^{0,3}$ 
of weight $m=3$ and rank $4$ with $a +|IV| > 0$ is described as follows: 
$$
h^1_{L^2}(S,\V)^{(4,0)}=h^0(\bar{S},E^{3,0}(IV) \otimes \Omega^1_{\bar{S}})=g-1+a + |IV|, \; h^1_{L^2}(S,\V)^{(3,1)}= h^1_{L^2}(S,\V)^{(1,3)}=0, 
$$
$$
h^1_{L^2}(S,\V)^{(2,2)}=h^0(\bar{S},E^{1,2}(IV) \otimes \Omega^1_{\bar{S}}) \oplus h^1(\bar{S},E^{2,1}(-I-III))=2h^0(\bar{S},E^{1,2}(IV) \otimes \Omega^1_{\bar{S}}). 
$$
The assumptions imply that $|I|=|III|=0$ and $a=b+2g-2+\sharp D$. 
\end{thm}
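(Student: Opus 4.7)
The plan is to decompose the $L^2$-Higgs complex by its Hodge bidegree: for each $p \in \{0,1,2,3,4\}$ one obtains a two-term sub-complex
\[
\Omega^0_{(2)}(E)^{p,3-p} \stackrel{\vartheta}{\lra} \Omega^1_{(2)}(E)^{p,3-p}
\]
(with missing terms understood to be zero) whose hypercohomology $\H^1$ contributes exactly the Hodge piece $h^{p,4-p}$. Before analyzing the individual sub-complexes I establish $|I| = |III| = 0$: inspection of the Jordan normal forms $T_I$ and $T_{III}$ listed in the lemma above shows that in both cases $\log T$ has a nonzero middle component ${\rm gr}_F N \colon E^{2,1}_P \to E^{1,2}_P$. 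Since this is precisely the residue at $P$ of the middle Higgs map $\vartheta \colon E^{2,1} \to E^{1,2} \otimes \Omega^1_{\bar S}(\log D)$, the assumption that this map vanishes globally rules out both types, leaving $\sharp D = |II| + |IV|$.

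Next, comparing degrees in the line-bundle isomorphism $\vartheta \colon E^{3,0} \stackrel{\cong}{\lra} E^{2,1} \otimes \Omega^1_{\bar S}(\log D)$ yields $a = b + (2g - 2 + \sharp D)$. For the $(4,0)$ piece the sub-complex is $[0 \to E^{3,0}(IV) \otimes \Omega^1_{\bar S}]$, and Riemann--Roch combined with Serre duality on $(E^{3,0})^{-1}(-IV)$ (whose degree $-(a+|IV|)$ is negative by hypothesis) gives $h^{4,0} = g - 1 + a + |IV|$; Hodge symmetry produces $(0,4)$. For the $(3,1)$ piece, using $|III| = 0$, the sub-complex is
\[
\vartheta \colon E^{3,0}(-II) \lra E^{2,1}(IV) \otimes \Omega^1_{\bar S},
\]
and substituting $E^{3,0} \cong E^{2,1} \otimes \Omega^1_{\bar S}(II + IV)$ identifies both terms with $E^{2,1}(IV) \otimes \Omega^1_{\bar S}$ and the restricted $\vartheta$ with the identity, so $\H^1 = 0$ and $h^{3,1} = 0$; the $(1,3)$ piece is handled analogously via the second outer isomorphism.

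For the $(2,2)$ piece, the middle Higgs field vanishes by decomposedness, so the sub-complex has zero differential and its hypercohomology splits cleanly as
\[
h^{2,2} = h^0(\bar S, E^{1,2}(IV) \otimes \Omega^1_{\bar S}) + h^1(\bar S, E^{2,1}).
\]
To equate the two summands I would invoke the polarization of the underlying VHS, which on canonical extensions provides a perfect pairing $E^{2,1} \otimes E^{1,2} \to {\mathcal O}_{\bar S}(-IV)$; the $-IV$ twist arises because at each type $IV$ point the dual eigenvalues $\lambda, \lambda^{-1}$ on $E^{2,1}, E^{1,2}$ correspond to canonical extension exponents $\alpha, 1 - \alpha$ summing to $1$. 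This yields $E^{1,2}(IV) \cong (E^{2,1})^\vee$, whence Serre duality supplies $h^0(E^{1,2}(IV) \otimes \Omega^1_{\bar S}) = h^0((E^{2,1})^\vee \otimes \Omega^1_{\bar S}) = h^1(E^{2,1})$, completing the proof. The main obstacle is precisely this polarization extension with the correct $-IV$ twist; equivalently one can work with the degree identity $b' = b + |IV|$ from Remark \ref{degreeremark}, which gives degree equality of the two relevant line bundles, and then rule out the nonvanishings of $h^0(E^{2,1})$ and $h^1(E^{1,2}(IV) \otimes \Omega^1_{\bar S})$ via polystability of the Higgs bundle, after which Riemann--Roch forces both summands to equal $g - 1 - b$.
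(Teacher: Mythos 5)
Your proposal is correct and follows the same overall architecture as the paper's proof: decompose the $L^2$-Higgs complex by Hodge bidegree, compute the $(4,0)$ piece by Riemann--Roch, kill the $(3,1)$ piece because the restricted outer Higgs map is an isomorphism of line bundles of equal degree, and split the $(2,2)$ piece into the two summands $h^0(E^{1,2}(IV)\otimes\Omega^1_{\bar S})$ and $h^1(E^{2,1}(-I-III))$. Where you differ is in how you justify the two steps the paper asserts without argument. For $|I|=|III|=0$ the paper invokes ``the symmetry of decomposed Higgs bundles''; your residue argument --- that $\mathrm{gr}_F N$ has a nonzero middle component $E^{2,1}_P \to E^{1,2}_P$ for types $I$ and $III$, which is the residue of the vanishing middle Higgs map --- is a concrete and correct way to see this. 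For the equality of the two $(2,2)$ summands the paper simply states that they are ``dual to each other'' (implicitly, the conjugation duality on the polarized weight-$4$ Hodge structure), whereas you derive the isomorphism $E^{1,2}(IV)\cong (E^{2,1})^{\vee}$ from the extension of the polarization to the quasi-canonical extensions, with the $-IV$ twist accounted for by the exponents $\alpha$ and $1-\alpha$ at strictly quasi-unipotent points; this is correct, consistent with $b'=b+|IV|$ in Remark \ref{degreeremark}, and arguably more self-contained than the paper's appeal to duality. Your fallback via degrees plus stability is the weaker of your two routes (in the decomposed case $\V$ is reducible, so one would need stability of the rank-two summands rather than of $E$ itself), but since your primary polarization argument stands on its own, there is no gap.
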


\begin{proof}  We use the same notations for the $L^2$-Higgs complex $\Omega^0_{(2)}(E) {\buildrel \vartheta \over \to} \Omega^1_{(2)}(E)$ as above. 
The symmetry of decomposed Higgs bundles implies that $|I|=|III|=0$, since such degenerations cannot occur. 
As $E$ is decomposed, also the arrow $\Omega^0_{(2)}(E)^{2,1} \to \Omega^1_{(2)}(E)^{1,2}$ is still zero. Also the two non-zero arrows 
in the following diagram remain isomorphisms (which implies again that $|I|=0$):
$$
\begin{xy}
\xymatrix{
E^{3,0}(-II-III)  \ar[rd]_\cong^{\vartheta} &   E^{2,1}(-I-III)  \ar[rd]_{0}^{\vartheta}  & E^{1,2}(-II) \ar[rd]_\cong^{\vartheta}  &  E^{0,3} \\
E^{3,0}(IV)\otimes  \Omega^1_{\bar{S}} &  E^{2,1}(IV) \otimes  \Omega^1_{\bar{S}} & E^{1,2}(IV)\otimes  \Omega^1_{\bar{S}} &  E^{0,3}(III+IV) \otimes  \Omega^1_{\bar{S}} } 
\end{xy}
$$
It follows that $a=b+2g-2+|II|+|III|+|IV|$, and by Riemann-Roch, using the assumption $a +|IV| > 0$,
$$
h^1_{L^2}(S,\V)^{(4,0)}=h^1_{L^2}(S,\V)^{(0,4)}=h^0(\bar{S},E^{3,0}(IV) \otimes \Omega^1_{\bar{S}})=g-1+a +|IV|, 
$$
$$
h^1_{L^2}(S,\V)^{(3,1)}= h^1_{L^2}(S,\V)^{(1,3)}=0, 
$$
$$
h^1_{L^2}(S,\V)^{(2,2)}=h^0(\bar{S},E^{1,2}(IV) \otimes \Omega^1_{\bar{S}}) \oplus h^1(\bar{S},E^{2,1}(-I-III)). 
$$
In the last line, the two summands are dual to each other, which implies again $|I|=|III|=0$, and 
$h^1_{L^2}(S,\V)^{(2,2)}=2 h^0(\bar{S},E^{1,2}(IV) \otimes \Omega^1_{\bar{S}})$.  
\end{proof}

Theorem \ref{m3bis} implies: 

\begin{cor} In Rohde's example one has $h^1_{L^2}(S,\V)=0$, consequently all Hodge numbers vanish: 
$$
h^1_{L^2}(S,\V)^{(4,0)}=h^1_{L^2}(S,\V)^{(0,4)}=h^1_{L^2}(S,\V)^{(3,1)}= h^1_{L^2}(S,\V)^{(1,3)}=h^1_{L^2}(S,\V)^{(2,2)}=0.
$$
\end{cor}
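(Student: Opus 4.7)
The plan is a direct application of Theorem \ref{m3bis} to the specific numerical data of Rohde's example, followed by a degree computation on $\P^1$. First I would collect the relevant invariants: from the paragraph preceding the definition we know $\bar{S}=\P^1$ (so $g=0$), that there are three singular points with $|IV|=1$ and $|II|=2$, and that the associated corollary gives $a=0$ and $b=-1$. The decomposedness condition required by Theorem \ref{m3bis} was just verified via the isomorphism statements on $\vartheta$ in the preceding corollary, so the theorem applies. The necessary hypothesis $a+|IV|>0$ holds since $0+1=1>0$.

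Next I would plug these numbers into each Hodge piece produced by Theorem \ref{m3bis}. The $(4,0)$ and $(0,4)$ parts are $g-1+a+|IV|=0-1+0+1=0$. The theorem directly gives $h^1_{L^2}(S,\V)^{(3,1)}=h^1_{L^2}(S,\V)^{(1,3)}=0$ under the decomposedness assumption, so these are immediate.

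The only remaining piece is the $(2,2)$ part, and this is the step that requires a small computation rather than a quotation. By Theorem \ref{m3bis},
\[
h^1_{L^2}(S,\V)^{(2,2)}=2\,h^0\bigl(\P^1,E^{1,2}(IV)\otimes\Omega^1_{\P^1}\bigr).
\]
To compute this I would determine $\deg E^{1,2}$ using Remark \ref{degreeremark}: one has $b'=b+|IV|=-1+1=0$, and since $b':=-\deg E^{1,2}$ this forces $\deg E^{1,2}=0$. Therefore the line bundle $E^{1,2}(IV)\otimes\Omega^1_{\P^1}$ on $\P^1$ has degree $0+|IV|-2=1-2=-1<0$, hence no global sections, giving $h^1_{L^2}(S,\V)^{(2,2)}=0$.

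Assembling the pieces, every Hodge summand vanishes, so $h^1_{L^2}(S,\V)=0$. The proof is essentially a bookkeeping exercise once Theorem \ref{m3bis} is in place; the only nontrivial ingredient is reading off $\deg E^{1,2}=0$ from the numerical identities of Remark \ref{degreeremark}, and I expect that small step to be the one worth writing out carefully.
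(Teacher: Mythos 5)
Your proposal is correct and follows exactly the route the paper intends: the corollary is a direct numerical instantiation of Theorem \ref{m3bis} with $g=0$, $a=0$, $b=-1$, $|II|=2$, $|IV|=1$, $|I|=|III|=0$, and your treatment of the $(2,2)$ piece via $b'=b+|IV|=0$ (whose validity in the decomposed case the paper explicitly notes just before the definition) correctly yields $\deg\bigl(E^{1,2}(IV)\otimes\Omega^1_{\P^1}\bigr)=-1<0$. The paper leaves all of this implicit ("Theorem \ref{m3bis} implies"), so you have simply written out the intended bookkeeping, and it is consistent with the paper's own check sum $8g-8+|I|+2|II|+3|III|+4|IV|=0$.
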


\medskip
In particular, since $|I|=|III|=0$ and $|II|=2$, $|IV|=1$, the check sum 
$$
h^1(j_*\V)=h^{4,0}+h^{3,1}+h^{2,2}+h^{1,3}+h^{0,4}=8g-8 + |I| + 2|II| + 3|III|+ 4|IV|=0
$$
is correct. Base change maps $e: \P^1 \to \P^1$ with prescribed ramification lead to more families where the theorem can be applied. 
Details can be found in the forthcoming thesis of Henning Hollborn \cite{hollborn}. \\

{\bf Acknowledgement:} This work builds up on \cite{hanoi} in an essential way, and we would like to thank P. L. del Angel, 
D. van Straten, and K. Zuo for the previous collaboration. We thank Xuanming Ye for several additional discussions, and the referee
for some valuable improvements. This work was supported by DFG Sonderforschungsbereich/Transregio 45.

\end{document}